%%%%%%%%%%%%%%%%%%%%%%%%%%%%%%%%%%%%%%%%%%%%%%%%%%%%%%%%%%%%%%%%%%%%%%%%%%%%%%%%
%2345678901234567890123456789012345678901234567890123456789012345678901234567890
%        1         2         3         4         5         6         7         8
\documentclass[letterpaper, 10 pt, conference]{ieeeconf}  % Comment this line out
                                                          % if you need a4paper
%\documentclass[a4paper, 10pt, conference]{ieeeconf}      % Use this line for a4
                                                          % paper

\IEEEoverridecommandlockouts                              % This command is only
                                                          % needed if you want to
                                                          % use the \thanks command
\overrideIEEEmargins
\usepackage{graphics}
\usepackage{graphicx}
\usepackage{epsfig,amssymb}
\usepackage{amsmath}
\usepackage{mathrsfs}
\usepackage{color}
\usepackage{array}
\usepackage{amsfonts,booktabs}
\usepackage{lipsum,amsmath,multicol}
\usepackage{graphicx}
\usepackage{subfigure}
\usepackage{times}
\usepackage{pdfsync}
\usepackage{pgfplots}
\usepackage{pgfplotstable}
\usepackage[subnum]{cases}
\usepackage{filecontents}
\usepackage{multirow}
\usepackage{algorithm}
\usepackage{algpseudocode}
\usepackage{float}
\usepackage{setspace}
%%%%%%%%%%%%%%%%%%%%%%%%%%%%%%%%%Math notations%%%%%%%%%%%%%%%%%%%%%%%%%%%%%%%%%%%%%
\newtheorem{theorem}{Theorem}
\newtheorem{lemma}{Lemma}
\newtheorem{definition}{Definition}
\newtheorem{corollary}{Corollary}
\newtheorem{proposition}{Proposition}

\newtheorem{remark}{Remark}

\newcommand{\beq}{\begin{equation}}
\newcommand{\eeq}{\end{equation}}
\newcommand{\beqa}{\begin{eqnarray}}
\newcommand{\eeqa}{\end{eqnarray}}

\newcommand{\paren}[1]{\left(#1\right)}

\newcommand{\field}[1]{\ensuremath{\mathbb{#1}}}
\newcommand{\abs}[1]{\left|#1\right|} % absolute value
 % natural numbers
\newcommand{\R}{\ensuremath{\field{R}}} % real numbers
 % real numbers
 % positive real numbers
 % integers
 % positive integers
\newcommand{\1}{\ensuremath{\mathbf{1}}} % vector of all 1's
\newcommand{\I}[1]{\ensuremath{\mathsf{1}_{\left\{#1\right\}}}} % indicator function
 % indicator function, no brackets
 % arrow for limits
\newcommand{\ra}{\ensuremath{\rightarrow}} % abbreviation for right arrow
\newcommand{\PR}[1]{\ensuremath{\mathsf{Pr}\left\{#1\right\}}} % probability with braces
\newcommand{\PRP}[1]{\ensuremath{\mathsf{Pr}\left(#1\right)}} %\probability with parentheses
\newcommand{\PRPI}[2]{\ensuremath{\mathsf{Pr}_{#1}\left(#2\right)}} %\probability with parentheses ansd Index
 %Plain probability without any parentheses and braces
 % Plain expectation without any parentheses and braces
 %Expectation with square parentheses
\newcommand{\ESI}[2]{\ensuremath{\mathsf{E}_{#1}\left[#2 \right]}} %Expectation with square parentheses and index
 %Entropy power
 %Entropy
 %Variance with parentheses
 %Triangle equation for definitions
 % subject to
 %Exponents of e

\newcommand{\BO}[1]{\ensuremath{O\paren{#1}}}

\newcommand{\norm}[2]{\left\Vert#1\right\Vert_{#2}}

%%%%%%%%%%%%%%%%%%%%%%Specific notations%%%%%%%%%%%%%%%%%%%%%%%%%%%%%%%%%%%%%%%%%%%%%%%%%%%%
\newcommand{\utility}{u\left(x_{i},z_{i,n},\alpha_i\right)}
% See the \addtolength command later in the file to balance the column lengths
% on the last page of the document

\usepackage[active]{srcltx} % SRC Specials: DVI [Inverse] Search

\usepackage[normalem]{ulem}

% The following packages can be found on http:\\www.ctan.org
%\usepackage{graphics} % for pdf, bitmapped graphics files
%\usepackage{epsfig} % for postscript graphics files
%\usepackage{mathptmx} % assumes new font selection scheme installed
%\usepackage{times} % assumes new font selection scheme installed
%\usepackage{amsmath} % assumes amsmath package installed
%\usepackage{amssymb}  % assumes amsmath package installed

\title{\LARGE \bf Nash Equilibrium Approximation under Communication and Computation Constraints in
Large-Scale Non-cooperative Games}

%\author{ \parbox{3 in}{\centering Huibert Kwakernaak*
%         \thanks{*Use the $\backslash$thanks command to put information here}\\
%         Faculty of Electrical Engineering, Mathematics and Computer Science\\
%         University of Twente\\
%         7500 AE Enschede, The Netherlands\\
%         {\tt\small h.kwakernaak@autsubmit.com}}
%         \hspace*{ 0.5 in}
%         \parbox{3 in}{ \centering Pradeep Misra**
%         \thanks{**The footnote marks may be inserted manually}\\
%        Department of Electrical Engineering \\
%         Wright State University\\
%         Dayton, OH 45435, USA\\
%         {\tt\small pmisra@cs.wright.edu}}
%}

\author{Ehsan Nekouei, Tansu Alpcan and Girish Nair
\thanks{Department of Electrical and Electronic Engineering, The University of Melbourne, VIC 3010, Australia. E-mails: \{ehsan.nekouei, tansu.alpcan, gnair\}@unimelb.edu.au }}% <-this % stops a space

\begin{document}

\maketitle
\thispagestyle{empty}
\pagestyle{empty}

%%%%%%%%%%%%%%%%%%%%%%%%%%%%%%%%%%%%%%%%%%%%%%%%%%%%%%%%%%%%%%%%%%%%%%%%%%%%%%%%
\begin{abstract}
This paper studies the problem of Nash equilibrium approximation in large-scale heterogeneous (static) mean-field games under communication and computation constraints. A deterministic mean-field game is considered in which the utility function of each agent depends on its action, the average of other agents' actions (called the mean variable of that agent) and a deterministic parameter. It is shown that the equilibrium mean variables of all agents converge uniformly to a constant, called asymptotic equilibrium mean (AEM), as the number of agents tends to infinity. The AEM, which depends on the limit of empirical distribution of agents' parameters, determines the asymptotic equilibrium behavior of agents. Next, the problem of approximating the AEM at a processing center under communication and computation constraints is studied. Three approximation methods are proposed to substantially reduce the communication and computation costs of approximating AEM at the processing center. The accuracy of the proposed approximation methods is analyzed and illustrated through numerical examples.
\end{abstract}

%%%%%%%%%%%%%%%%%%%%%%%%%%%%%%%%%%%%%%%%%%%%%%%%%%%%%%%%%%%%%%%%%%%%%%%%%%%%%%%%
\section{INTRODUCTION}
Game theory provides mathematical frameworks for analyzing and predicting the strategic behavior of profit-maximizing agents. Computing the solution of a game is a central  research problem in the game theory literature. However, in large population games, it may not be computationally feasible to precisely calculate a game's solution. Furthermore, even when the computation of solution is possible, it may not provide any insight on the interplay between the equilibrium and population of the game. To understand the equilibrium behavior of large-scale games, which are computationally difficult to solve, it is common to analyze them in the infinite population limit. Despite the asymptotic nature of this method, it provides approximate solutions for large-scale games and sheds light on their equilibrium-population interplay.

Finding an approximate solution in large-scale \emph{heterogeneous} games, where the agents' utilities are asymmetric, becomes a cumbersome task as such solutions depend on a large number of parameters. When the approximate solution is computed centrally, \emph{e.g.,} using the cloud computing technology, each agent needs to transmit its parameters to the ``cloud" which requires  substantial communication and storage resources. The current paper studies the problem of approximating the equilibrium behavior of large-scale games under communication and computation constraints.
\subsection{Related work}
Aumann in \cite{Aumann64} showed that in a market with a continuum of traders the core of the market, \emph{i.e.,} the set of allocations in which traders have no  motivation for coalition, coincides with the equilibrium allocation, thus, the market becomes perfectly competitive. The authors in \cite{MS78} studied the value of a voting game with large number of agents. These results were extended to cooperative games in \cite{Neyman02}. Haurie and Marcotte in \cite{HM85} considered a transportation network with $m$ source-destination pairs in which each source-destination pair has $n$ \emph{identical} agents sharing the same link traversal cost and the same demand function. They showed that as $n$ becomes large the \emph{equilibrium flow} of links is characterized by a Wardop equilibrium. We note that the existence of pure Nash equilibrium for the non-cooperative games with a continuum of players, \emph{i.e,} with an uncountable set of agents, has been studied in \cite{YZ05,Rauh03,KS02} and \cite{KRS97}.

%The equilibrium behavior of \emph{deterministic} mean-field games, wherein each agent's utility depends on the mean of all agents' states (or control strategies) has been studied in \cite{BP13}-\cite{}.  The authors in \cite{BP13} considered a \emph{homogenous} continuous-time linear quadratic dynamic game and characterized the equilibrium behavior of the game as the number of agents becomes large. In \cite{BB14}, the demand management in an electricity network is modeled using a homogenous dynamic game in which each agent uses a binary on-off control variable to manage its temperature. They studied equilibrium behavior of the game with the agents' population.

\emph{Homogeneous deterministic} mean-field games, wherein each agent's utility depends on the mean of all agents' states (or control strategies), have been investigated in the literature.  The papers \cite{BP13} and \cite{BB14} examined the equilibrium behavior of  homogenous continuous-time (deterministic) mean-field games as well as their engineering applications. The authors in \cite{MCH13} proposed a mean-field based decentralized equilibrium seeking algorithm for a homogenous discrete-time game with application in the plug-in electric vehicle charging problem. The paper \cite{GPCL16} proposed decentralized equilibrium seeking methodologies for linear quadratic mean-field games with convex states. The interested reader is refereed to \cite{HMC06, Huang10, NC13} and references therein for a thorough treatment of the stochastic mean filed games with minor and major agents.

Different from these works, the current paper focuses on the problem of approximating the equilibrium strategies of agents, in heterogeneous,  static (deterministic) mean-field games, under communication and computation constraints.
 	
\subsection{Contributions}
This paper considers the problem of approximating the Nash equilibrium of a non-cooperative mean-field game at a processing center. A heterogeneous, deterministic mean-field game is considered in which  the (non-linear) utility function of each agent $i$ depends on its action, the average of other agents' actions (called the mean variable of agent $i$) and a \emph{deterministic} parameter $\alpha_i$.  First, the existence and uniqueness of the Nash equilibrium (NE) of the considered game are studied. It is shown that the equilibrium mean variables of all agents converge uniformly to a constant called asymptotic equilibrium mean (AEM) as the number of agents becomes large.  The AEM, which characterizes the asymptotic behavior of each agent's NE strategy, depends on the limit of (deterministic) empirical distribution of $\left\{\alpha_i\right\}_i$. % Due to the heterogeneity of utility functions, the processing center requires either the $\alpha_i$s or their limiting distribution to approximate the NE.

 Next, three approximation methods are proposed to reduce the computational/communication complexity of finding the AEM at a processing center. The AEM is the solution of an implicit equation which involves integration with respect to the limiting distribution of $\left\{\alpha_i\right\}_i$ (see equation \eqref{EQ: MF} for more details). The first approximation method reduces the computational complexity of solving the implicit equation by quantizing the underlying distribution function. Under this method, the processing center requires the knowledge of the limiting distribution of $\left\{\alpha_i\right\}_i$.

The second method approximates the AEM using the empirical distribution of $\left\{\alpha_i\right\}_i$. This method is suitable for applications in which the limiting distribution of $\left\{\alpha_i\right\}_i$ is not known. However, it requires that each agent $i$ to transmit its $\alpha_i$ to the processing center which might entail substantial communication and storage costs when the agents' population is large. Under the third approximation method, the processing center receives the quantized versions of $\left\{\alpha_i\right\}_i$ and approximates the AEM using the quantized values. The accuracy of the proposed approximation methods is investigated.  The proposed approximation methods significantly reduce the (computation/communication) cost of evaluating the equilibrium of large scale heterogeneous games using modern computation technologies, \emph{e.g.,} cloud computing.   %Interestingly, it is shown that the solution of the second method provides an accurate (uniform) approximation of the equilibrium mean variables of agents even when the limiting distribution of $\alpha_i$s is not known or does not exist.

%The results of this paper can be utilized as low complexity computational frameworks for finding (linear) pricing functions, imposed on the utilities of agents, which allow a network operator/manager to steer the average NE strategies of agents to a desired region in non-cooperative games with large populations.
The rest of this paper is structured as follows. Section \ref{Sec: SM} describes our system model and assumptions. Section \ref{Sec: EA} presents our results on the asymptotic equilibrium behavior of agents. Section \ref{Sec: PC} studies the AEM approximation problem under communication and computation constraints. Section \ref{Sec: NR} presents the numerical results and Section \ref{Sec: Conc} concludes the paper.
%%%%%%%%%%%%%%%%%%%%%%%%%%%%%%%%%%%%%%%%%%%%%%%%%%%%%%%%%%%%%%%%%%%%%%%%%%%%%%%%
\section{System Model}\label{Sec: SM}

%\ta{[Since this is a game paper, I propose that we use the term player instead of agent]}

Consider a static non-cooperative game with $n$ agents. Let $\utility$ denote the utility function and $x_{i}$ the action (decision) of agent $i$. The utility function is parameterized by $\alpha_i\in\R$ and
\begin{eqnarray}
z_{i,n}=\frac{1}{n-1}\sum_{j\neq i}x_{j}\nonumber
\end{eqnarray}
is the \emph{mean variable} observed by agent $i$, \emph{i.e.,} the average of actions of all agents except the $i$-the one. The action space of each agent is assumed to be limited to the interval $\left[a,b\right]\subset\R$ as a starting point, which we will generalize to jointly convex and compact constraint sets in the future.
%\ta{[This is a quite restictive assumption and a very simple constraint set. Is it necessary?]}
We refer to the defined $n$-player non-cooperative game among agents as $\mathcal{G}_n$.

Note that the utility functions of agents belong to the parametrized family of functions $\left\{u\paren{\cdot,\cdot,\alpha}\right\}_{\alpha\in\R}$. %\ta{[Shouldn't this be $\alpha$ instead of $t$?]}
%\sout{Thus, the desirability of each point (in the joint action space) for agent $i$ varies with $\alpha_i$.}
%\ta{[this sentence is confusing and probably not needed.]}
The interpretation of the $\alpha$ parameter depends on the underlying application. For example, consider the power control problem in downlink code-division-multiple-access networks in a slow fading environment with non-orthogonal spreading codes and  match filter receivers. In this application, the parameter $\alpha_i$ may represent the channel power gain between the base station and receiver (agent) $i$. %   will be the channel   of individual agents. {\bf As an example, consider a downlink code division multiple access network wherein the quality of service of each agent depends on its channel gain the average of transmission power of other agents. In this example, the channel gain of agent $i$ is $\alpha_i$.}

In non-cooperative games, each agent selfishly maximizes its own utility function, given the actions (strategies). % \sout{irrespective of the utility values} of the other agents.
 Thus, agent $i$ is interested in the solution of the following optimization problem:
  \begin{eqnarray} \label{e:agentopt1}
\underset{a\leq x_{i}\leq b}{\rm maximize} & \utility . % \forall i.\nonumber
 \end{eqnarray}
%We assume that the utility function of each agent $i$ is strongly concave in $x_i$ for all $z_{i,n}$ and $\alpha_i$, which leads to a unique solution.

In this paper, the Nash equilibrium (NE) is considered as the solution of the non-cooperative game among agents. Let $x^\star_{i,n}$ be the NE strategy of agent $i$ in the game $\mathcal{G}_n$ with $n$ players.
At the NE point of the game, no agent has motivation to unilaterally deviate his strategy given the NE strategy of the other agents, \emph{i.e.},
  \begin{eqnarray}
x^\star_{i,n}=\arg\underset{a\leq x_{i}\leq b}{\max} & u\paren{x_{i},z^\star_{i,n},\alpha_i}, \forall i.\nonumber
 \end{eqnarray}
where
\begin{eqnarray}
z^\star_{i,n}=\frac{1}{n-1}\sum_{j\neq i}x^\star_{j,n}\nonumber
\end{eqnarray}
Next, the best response function of each agent is introduced which plays an important role in our final results. The best response of agent $i$ to $z_{i,n}$, which follows from the solution of (\ref{e:agentopt1}), is defined as
  \begin{eqnarray}
{\rm Br}\paren{z_{i,n},\alpha_i}=\arg\underset{a\leq x_{i}\leq b}{\max} & u\paren{x_{i},z_{i,n},\alpha_i}, \forall i.\nonumber
 \end{eqnarray}
%\ta{[This somewhat duplicates Eq. (1)]}
It is well known that, any  intersection of the best response functions of agents is a NE, \emph{i.e.,}
\begin{align}
x^\star_{i,n}={\rm Br}\paren{z^\star_{i,n},\alpha_i},\quad  1\leq i\leq n.
\end{align}

This model will be used to characterize the asymptotic behavior of the NE strategies of agents in the game $\mathcal{G}_n$ as the number of agents becomes large. In practice, the results obtained in the paper
enables a centralised \textit{processing center} compute the approximate NE of a large-scale game with
many agents. The processing center approximately computes the NE based on either a priori information available,
e.g. coming from a previous instance of a similar game, or explicit information communicated by the
agents to the processing center about their individual utilities, in the form of (quantized) $\alpha_i$s.
The communication and computational model is visualised in Figure~\ref{fig:model}. Note that, from
a technological perspective, this setup fits well into prevalent cloud computing models.
\begin{figure}[!htp]
\centering{\includegraphics[width=0.7\columnwidth]{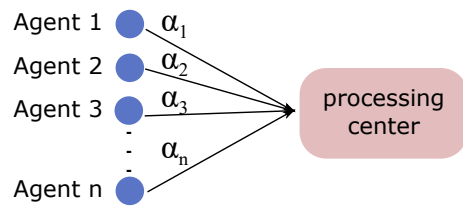}}
\caption{The communication and computational model.} \label{fig:model}
\end{figure}

The following additional assumptions are imposed on the utility functions of agents and the parameters $\left\{\alpha_i\right\}_i$.
\begin{enumerate}
%\item The utility function $\utility$ is strongly concave in $x_i$ for all $z_{i,n}$, $\alpha_i$ and $i$.
\item The best response function  is Lipschitz in $z$ with the Lipschitz constant $L_z< 1$, \emph{i.e.,} there exists a positive constant $L_z<1$ such that
  \begin{align}
&\abs{{\rm Br}\paren{z,\alpha}-{\rm Br}\paren{z^\prime,\alpha}}\leq L_z {\abs{z-z^\prime}} \quad \forall z,z^\prime,\alpha\in\R  \nonumber
 \end{align}
\item The best response function is Lipschitz in the parameter $\alpha$ with the Lipschitz constant $L$, \emph{i.e.,} we can find a positive constant $L$ such that %\ta{[I find interchangeable use of $\alpha$ and $t$ very confusing!]}
  \begin{align}
\abs{{\rm Br}\paren{z,\alpha}-{\rm Br}\paren{z,\alpha^\prime}}\leq L {\abs{\alpha-\alpha^\prime}} \quad \forall z,\alpha,\alpha^\prime\in\R\nonumber
 \end{align}
\item The empirical distribution of the \emph{deterministic} parameters $\left\{\alpha_i\right\}_i$ converges to a continuous distribution function $P\paren{\alpha}$ as the number of agents increases. Let $P_n\paren{\alpha}$ be the (deterministic) empirical distribution function of $\left\{\alpha_i\right\}_{i=1}^n$, \emph{i.e.,}
  \begin{eqnarray}
P_n\paren{\alpha}=\frac{1}{n}\sum_{i=1}^n\I{\alpha_i\leq \alpha}.\nonumber
\end{eqnarray}
Then, we have
 \begin{eqnarray}\label{Eq: Lim-Dis}
\lim_{n\rightarrow \infty}P_n\paren{\alpha}=P\paren{\alpha}\quad \forall x\in\R.
\end{eqnarray}
%for all $x\in\R$.% at which $P_\alpha\paren{x}$ is continuous.
\end{enumerate}
Assumptions 1 and 2 imply that the best response of each agent $i$ cannot change arbitrarily fast as its mean variable $z_{i,n}$ or its parameter $\alpha_i$ vary.

\begin{remark}
We note that our results in this paper do not entirely depend on the Assumption 3. In the case that the limiting distribution $P\paren{\alpha}$ is not known or does not exist, we use the empirical distribution function of $P_n\paren{\alpha}$ to study the asymptotic behavior of the NE of the game $\mathcal{G}_n$ (see Proposition \ref{Lem: U-PMFA} for more details).
\end{remark}
%\ta{[then, how about we make the assumption only when the time comes for specific results and remove it from here?]}

%\begin{remark}
%The above empirical distribution functions are deterministic objects as opposed to the standard literature on the empirical processes \ta{[what is the standard lit on emp. process? Confusing! Do you mean Bayesian games?]} in which the empirical distributions are random functions. In fact, the results of this paper are derived in a deterministic setting as the parameters $\alpha_i$s are deterministic. The expectation functionals in the proofs are either with respect to $P_n\paren{x}$ or $P_\alpha\paren{x}$ which can be carried out without introducing a probability space.
%\end{remark}

%\ta{[There is nothing in the model about communication aspects. Maybe we should put something here on that as well?
%The paper is about computing NE offline at a central location using asymptotic approximations based on information collected from agents. We should structure it accordingly and even start the next section as such. The Eql. analysis there should be presented as a precursor to the main comm analysis. We should not emphasise computation too much, given that this is CDC...]}

%%%%%%%%%%%%%%%%%%%%%%%%%%%%%%%%%%%%%%%%%%%%%%%%%%%%%%%%%%%%%%%%%%%%%%%%%%%%%%%%
\section{Equilibrium Analysis}\label{Sec: EA}
In this section, we first study the existence and uniqueness of the NE of the game $\mathcal{G}_n$. We next characterize the asymptotic behavior of NE strategies of agents. %The results of this section form the basis for approximating NE at the processing center under the communication and computation constraints.
%\ta{[anything on communication and quantization?]}
%\ta{[moving all proofs to appendix may not be a good idea for a CDC paper, better to embed them and show the math explicitly...]}
%\subsection{Equilibrium Analysis}

Next theorem establishes the existence and uniqueness of the NE of the game $\mathcal{G}_n$.
\begin{lemma}\label{Theo: Nash-uniq}
For $n\geq 2$, the game $\mathcal{G}_n$ admits a unique Nash equilibrium.
\end{lemma}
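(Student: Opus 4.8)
The plan is to recast the Nash-equilibrium condition as a fixed-point problem and invoke the Banach contraction mapping principle, which delivers existence and uniqueness simultaneously. Recall that a profile $\paren{x^\star_{1,n},\dots,x^\star_{n,n}}$ is a NE precisely when $x^\star_{i,n}={\rm Br}\paren{z^\star_{i,n},\alpha_i}$ for every $i$, with $z^\star_{i,n}=\frac{1}{n-1}\sum_{j\neq i}x^\star_{j,n}$. I would therefore define the best-response map $T:\left[a,b\right]^n\to\left[a,b\right]^n$ by $\paren{T\paren{\mathbf{x}}}_i={\rm Br}\paren{\frac{1}{n-1}\sum_{j\neq i}x_j,\alpha_i}$. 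Since each coordinate of $T$ is an $\arg\max$ over the interval $\left[a,b\right]$, the map sends the compact convex cube $\left[a,b\right]^n$ into itself, so the fixed points of $T$ are exactly the Nash equilibria of $\mathcal{G}_n$.

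The core estimate is to show that $T$ is a contraction in the sup-norm. Fix two profiles $\mathbf{x},\mathbf{x}^\prime\in\left[a,b\right]^n$. For each $i$, Assumption 1 (Lipschitz in $z$ with constant $L_z<1$) gives
\begin{align}
\abs{\paren{T\paren{\mathbf{x}}}_i-\paren{T\paren{\mathbf{x}^\prime}}_i}\leq L_z\abs{\frac{1}{n-1}\sum_{j\neq i}\paren{x_j-x^\prime_j}}.\nonumber
\end{align}
The key observation is that the mean variable is a convex combination of the coordinates: the $n-1$ weights $\frac{1}{n-1}$ sum to one, so the averaging cannot amplify distances. Hence
\begin{align}
\abs{\frac{1}{n-1}\sum_{j\neq i}\paren{x_j-x^\prime_j}}\leq\frac{1}{n-1}\sum_{j\neq i}\abs{x_j-x^\prime_j}\leq\infnorm{\mathbf{x}-\mathbf{x}^\prime}.\nonumber
\end{align}
Taking the maximum over $i$ yields $\infnorm{T\paren{\mathbf{x}}-T\paren{\mathbf{x}^\prime}}\leq L_z\infnorm{\mathbf{x}-\mathbf{x}^\prime}$.

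Because $L_z<1$, the map $T$ is a strict contraction on the complete metric space $\paren{\left[a,b\right]^n,\infnorm{\cdot}}$, and the Banach fixed-point theorem furnishes a unique fixed point, i.e.\ a unique NE. The main point requiring care --- rather than a genuine obstacle --- is the contraction estimate above: one must pair the Lipschitz constant $L_z<1$ from Assumption 1 with the fact that the mean variable averages the other agents' actions, so that passing from a single-coordinate bound to the sup-norm does not introduce a factor growing with $n$. The $\frac{1}{n-1}$ normalization is exactly what keeps the overall contraction factor at $L_z<1$ uniformly in $n\geq2$; note that Assumption 2 plays no role here and is instead reserved for the subsequent asymptotic analysis.
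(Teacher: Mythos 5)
Your proof is correct and follows essentially the same route as the paper: both arguments show that the best-response map on $\left[a,b\right]^n$ is a contraction in the sup-norm with factor $L_z<1$ (the paper writes the averaging step via the matrix $\frac{1}{n-1}\paren{\1_n-I_n}$, you via the triangle inequality, which is the same estimate), and then invoke the Banach fixed-point theorem together with the identification of Nash equilibria with fixed points of that map.
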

\begin{proof}
See Appendix \ref{App: Nash-uniq}.
\end{proof}
Lemma \ref{Theo: Nash-uniq} is proved by showing that the best response function of the game forms a contraction mapping.
%\ta{[Existence is a special case of the regular noncooperative game proof in Basar. For uniqueness, another way to approach it is sufficiency results in my old work. Then, Lipschitz is not needed? Do you need Lipschitz for later results as well? Do we trade here Lipschitz for diagonal strict convexity?]}
%\ta{[Do we need to make restrictive assumptions for uniqueness? I am not so sure. Existence is easy to show and maybe we can relax the constraint assumptions. Since we do eql. analysis, the same results should be applicable to multiple equilibria (with some computational issues? Even those may not matter in star topology)?]}
We  next characterize the asymptotic behavior of the (equilibrium)  mean variables $z^\star_{i,n}$s as the number of agents becomes large. To this end, consider the sequence of games $\left\{\mathcal{G}_n\right\}_{n=2}^\infty$ with increasing number of agents such that the equation \eqref{Eq: Lim-Dis} holds. For this sequence of games, we define the asymptotic equilibrium mean (AEM), \emph{i.e.,} $z^\star_{\rm AEM}$, which captures the asymptotic behavior of equilibrium mean variables. %  \ta{[what do you mean by this? needs a bit more explanation. We should also define the sequence of games here. There is also the danger of people confusing it with sequential games, we need to be very clear about this...]}
\begin{definition}
Consider the sequence of games $\left\{\mathcal{G}_n\right\}_{n=2}^\infty$ which satisfies the equation \eqref{Eq: Lim-Dis}. Then,  the asymptotic equilibrium mean (AEM) $z^\star_{\rm AEM}$ is defined as the solution of following equation
\begin{align}\label{EQ: MF}
z^\star_{\rm AEM}&=\int_{-\infty}^\infty {\rm Br}\paren{z^\star_{\rm AEM},\alpha}dP\paren{\alpha}\nonumber\\
&=\ESI{\nu}{{\rm Br}\paren{z^\star_{\rm AEM},\nu}}
\end{align}
where $\nu$ is a generic random variable distributed according to $P\paren{\alpha}$.
\end{definition}

 The asymptotic behavior of the NE of the game $\mathcal{G}_n$ can also be characterized using $z^\star_{\rm AEM}$. Next, we define the notion of asymptotic Nash equilibrium (ANE) strategy of each agent.
\begin{definition}
Let $\left\{\mathcal{G}_n\right\}_{n=2}^\infty$ be a sequence of games such that the Assumption 3 in Section \ref{Sec: SM} holds. Let $z^\star_{\rm AEM}$ be the solution of \eqref{EQ: MF}. Then, the asymptotic Nash equilibrium (ANE) strategy of the agent $i$, denoted by $x^\star_{i,\rm ANE}$, is defined as
\begin{eqnarray}
x^\star_{i,\rm ANE}={\rm Br}\paren{z^\star_{\rm AEM},\alpha_i}
\end{eqnarray}
\end{definition}
%\ta{[parameter $\alpha$ and its distribution plays an important role here, right? It is not clear from the definition. There could be different ways of creating sequence of games with increasing number of players selecting $\alpha$ in different ways, e.g. symmetric $\alpha_i=cont \forall i$ or according to a function $\alpha_i=f(i)$ etc. Does our definition capture all possibilities or would those lead to a different AME definition? (symmetric is probably a special case anyway)]}

%\begin{figure}[!htp]
%\centering{\includegraphics[width=0.7\columnwidth]{convergence.eps}}
%\caption{\ta{The unique NE solutions, $xj*_N$, of finite games in a collection, where each game has $N$ players and a different $\alpha\,j_N$, utility parameter vector, $j=1,\ldots,m$, asymptotically converge to the same ANE, $x*_{ANE}$, as $N \rightarrow \infty$, under the Assumption 4, i.e. $\lim_{n\rightarrow \infty}P_n\paren{x}=P_\alpha\paren{x}$.}
%} \label{fig:convergence}
%\end{figure}

The next lemma studies the uniqueness of $z^\star_{\rm AEM}$.
\begin{lemma}\label{Theo: MF-uniq}
The equation \eqref{EQ: MF} has a unique solution.
\end{lemma}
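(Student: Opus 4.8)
The plan is to recast \eqref{EQ: MF} as a fixed-point equation and invoke the Banach fixed-point theorem, mirroring the contraction argument behind Lemma \ref{Theo: Nash-uniq}. I would define the operator
\[
T\paren{z}=\int_{-\infty}^\infty {\rm Br}\paren{z,\alpha}\,dP\paren{\alpha},
\]
so that solutions of \eqref{EQ: MF} coincide exactly with fixed points of $T$. The first step is to exhibit a complete metric space that $T$ maps into itself: since every best-response value lies in the action interval $\sqparen{a,b}$, the integrand ${\rm Br}\paren{z,\alpha}$ is confined to $\sqparen{a,b}$, and hence $T\paren{z}\in\sqparen{a,b}$ for every $z$. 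Thus $T:\sqparen{a,b}\to\sqparen{a,b}$, and $\sqparen{a,b}$ equipped with the usual metric is complete.

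The key step is the contraction estimate. For arbitrary $z,z^\prime$ I would bound
\[
\abs{T\paren{z}-T\paren{z^\prime}}\leq\int_{-\infty}^\infty\abs{{\rm Br}\paren{z,\alpha}-{\rm Br}\paren{z^\prime,\alpha}}\,dP\paren{\alpha},
\]
then apply Assumption 1, which yields $\abs{{\rm Br}\paren{z,\alpha}-{\rm Br}\paren{z^\prime,\alpha}}\leq L_z\abs{z-z^\prime}$ uniformly in $\alpha$. Because this Lipschitz bound does not depend on $\alpha$, it pulls out of the integral, leaving $\abs{T\paren{z}-T\paren{z^\prime}}\leq L_z\abs{z-z^\prime}\int_{-\infty}^\infty dP\paren{\alpha}=L_z\abs{z-z^\prime}$. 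With $L_z<1$ from the hypothesis, $T$ is a contraction on $\sqparen{a,b}$, and the Banach fixed-point theorem then delivers a unique fixed point, i.e., a unique solution of \eqref{EQ: MF}.

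I do not expect a genuine obstacle here: the whole argument hinges on the contraction modulus $L_z<1$ surviving integration against $P$, which it does precisely because the Lipschitz constant in Assumption 1 is uniform in $\alpha$. The only items warranting a line of care are the measurability of $\alpha\mapsto{\rm Br}\paren{z,\alpha}$ so that the defining integral is well posed (guaranteed by the joint Lipschitz continuity in Assumptions 1 and 2), and the verification that $\sqparen{a,b}$ rather than all of $\R$ is the correct invariant domain, which is what keeps the fixed-point space both complete and bounded.
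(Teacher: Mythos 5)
Your proposal is correct and follows essentially the same route as the paper's own proof: both define the map $F\paren{z}=\ESI{\nu}{{\rm Br}\paren{z,\nu}}$ on $\left[a,b\right]$, pass the absolute value inside the expectation, invoke the uniform-in-$\alpha$ Lipschitz bound of Assumption 1 to get the contraction modulus $L_z<1$, and conclude via the Banach fixed-point theorem. Your additional remarks on measurability and the invariant domain $\left[a,b\right]$ are sound points of care that the paper leaves implicit, but they do not change the argument.
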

\begin{proof}
See Appendix \ref{App: MF-uniq}.
\end{proof}
The next theorem studies the asymptotic behavior of the triangular array of (equilibrium) mean variables $\left\{z^\star_{i,n}, 1\leq i\leq n\right\}_n$.
\begin{theorem}\label{Theo: MF}
Consider the sequence of games $\left\{\mathcal{G}_n\right\}_{n=2}^\infty$ such that the Assumption 3 in Section \ref{Sec: SM} holds. Let $z^\star_{i,n}$ denote the equilibrium mean variable of the agent $i$ at the NE of the game $\mathcal{G}_n$. Then, we have
\begin{eqnarray}
\lim_{n\rightarrow\infty} \sup_{1\leq i\leq n}\abs{z^\star_{i,n}-z^\star_{\rm AEM}}=0.\nonumber
\end{eqnarray}
\end{theorem}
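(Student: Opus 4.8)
The plan is to show that each equilibrium mean variable $z^\star_{i,n}$ is an \emph{approximate} fixed point of the empirical-distribution map $T_n\paren{z}=\int_{-\infty}^\infty {\rm Br}\paren{z,\alpha}\,dP_n\paren{\alpha}$, and then to compare this map with its limit $T\paren{z}=\int_{-\infty}^\infty {\rm Br}\paren{z,\alpha}\,dP\paren{\alpha}$, whose unique fixed point is $z^\star_{\rm AEM}$ by Lemma \ref{Theo: MF-uniq}. The key preliminary observation is that, since ${\rm Br}\paren{\cdot,\cdot}$ takes values in $\sqparen{a,b}$, every equilibrium mean variable $z^\star_{i,n}$ and $z^\star_{\rm AEM}$ lie in the compact interval $\sqparen{a,b}$; Assumption 1 then makes both $T_n$ and $T$ contractions with modulus $L_z<1$ on this interval. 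All estimates below will be uniform in $i$, so that taking the supremum at the end is immediate.

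First I would show that the mean variables of distinct agents nearly coincide. Writing $z^\star_{i,n}-z^\star_{k,n}=\frac{1}{n-1}\paren{x^\star_{k,n}-x^\star_{i,n}}$ and using $x^\star_{j,n}\in\sqparen{a,b}$ gives $\abs{z^\star_{i,n}-z^\star_{k,n}}\leq \frac{b-a}{n-1}$. Next I would substitute $x^\star_{j,n}={\rm Br}\paren{z^\star_{j,n},\alpha_j}$ into $z^\star_{i,n}=\frac{1}{n-1}\sum_{j\neq i}x^\star_{j,n}$, replace each $z^\star_{j,n}$ by $z^\star_{i,n}$ at a cost of $L_z\frac{b-a}{n-1}$ per term by Assumption 1, and then convert the $\frac{1}{n-1}\sum_{j\neq i}$ average into the full empirical average $\frac{1}{n}\sum_{j=1}^n=\int {\rm Br}\paren{z^\star_{i,n},\alpha}\,dP_n\paren{\alpha}$; the two bookkeeping corrections are each $\BO{1/n}$ because the summands are bounded by $b-a$. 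This yields $\abs{z^\star_{i,n}-T_n\paren{z^\star_{i,n}}}\leq \eps_n$ with $\eps_n=\paren{1+L_z}\frac{b-a}{n-1}\to 0$, uniformly in $i$.

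The step I expect to be the main obstacle is to control $T_n-T$ uniformly on $\sqparen{a,b}$. Assumption 3 (pointwise convergence of $P_n$ to the \emph{continuous} limit $P$) is exactly weak convergence $P_n\Rightarrow P$, so for each fixed $z$ the integrand ${\rm Br}\paren{z,\cdot}$, which is bounded (values in $\sqparen{a,b}$) and continuous in $\alpha$ by Assumption 2, gives $T_n\paren{z}\to T\paren{z}$. The difficulty is that $T_n-T$ must be evaluated at the \emph{moving} arguments $z^\star_{i,n}$, so pointwise convergence is not enough; I would upgrade it to uniform convergence on the compact set $\sqparen{a,b}$ using the fact that $\set{T_n}$ and $T$ are equi-Lipschitz (modulus $L_z$). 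Concretely, covering $\sqparen{a,b}$ by finitely many $\delta$-dense points and applying pointwise convergence at those points shows $\sup_{z\in\sqparen{a,b}}\abs{T_n\paren{z}-T\paren{z}}\to 0$.

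Finally I would combine the pieces. Since $z^\star_{\rm AEM}=T\paren{z^\star_{\rm AEM}}$ and $T$ is an $L_z$-contraction,
\begin{align}
\abs{z^\star_{i,n}-z^\star_{\rm AEM}}&\leq \abs{z^\star_{i,n}-T_n\paren{z^\star_{i,n}}}+\abs{T_n\paren{z^\star_{i,n}}-T\paren{z^\star_{i,n}}}\nonumber\\
&\quad+L_z\abs{z^\star_{i,n}-z^\star_{\rm AEM}}.\nonumber
\end{align}
Rearranging gives $\paren{1-L_z}\abs{z^\star_{i,n}-z^\star_{\rm AEM}}\leq \eps_n+\sup_{z\in\sqparen{a,b}}\abs{T_n\paren{z}-T\paren{z}}$. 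The right-hand side is independent of $i$ and tends to $0$, so dividing by $1-L_z>0$ and taking $\sup_{1\leq i\leq n}$ followed by $n\to\infty$ yields the claim.
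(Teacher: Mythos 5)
Your proof is correct, and it takes a genuinely different route from the paper's. You cast everything as an approximate-fixed-point comparison between the two contraction maps $T_n\paren{z}=\int {\rm Br}\paren{z,\alpha}\,dP_n\paren{\alpha}$ and $T\paren{z}=\int {\rm Br}\paren{z,\alpha}\,dP\paren{\alpha}$: every $z^\star_{i,n}$ is a uniform $\BO{1/n}$-fixed point of $T_n$, and $T_n\to T$ uniformly on $\sqparen{a,b}$ by equi-Lipschitz continuity plus a finite net, after which the $\paren{1-L_z}$ rearrangement finishes. The paper instead first reduces to agent $1$ (all mean variables being within $\BO{1/n}$ of each other, as in your first step), then discretizes the \emph{parameter} line into width-$\delta$ cells: Lipschitz continuity in $\alpha$ gives asymptotic expansions of $z^\star_{\rm AEM}$ and $z^\star_{1,n}$ as cell sums up to $\BO{L\delta}$ errors (their Lemmas \ref{Lem: AE-1} and \ref{Lem: AE-2}), weak convergence is invoked only against the single fixed function ${\rm Br}\paren{z^\star_{\rm AEM},\cdot}$ (Lemma \ref{Lem: AE-3}), and the proof ends by letting $\delta\to 0$. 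The trade-off: your argument needs uniform-in-$z$ convergence of $T_n$ precisely because the comparison point $z^\star_{i,n}$ moves, whereas the paper's decomposition sidesteps that by absorbing the moving argument into the $\paren{1-L_z}$ term before invoking any convergence of distributions; conversely, you avoid the $\alpha$-discretization and the final $\delta\to 0$ limit entirely, and you use Assumption 2 only qualitatively (continuity of ${\rm Br}$ in $\alpha$, needed for weak convergence), so the constant $L$ never appears in your bound, while it does in the paper's $\BO{L\delta}$ terms. A small simplification worth noting: if in your final triangle inequality you apply the contraction property to $T_n$ rather than $T$, i.e. bound $\abs{T_n\paren{z^\star_{i,n}}-T_n\paren{z^\star_{\rm AEM}}}\leq L_z\abs{z^\star_{i,n}-z^\star_{\rm AEM}}$ and compare $T_n$ with $T$ only at the fixed point $z^\star_{\rm AEM}$, then pointwise convergence at that single point suffices and your uniform-convergence step (the part you flagged as the main obstacle) can be dropped altogether.
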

\begin{proof}
Please see Appendix \ref{App: MF}.
\end{proof}
According to Theorem \ref{Theo: MF}, the mean variables of agents, at the NE of the game, converge uniformly to $z^\star_{\rm AEM}$ as the number of agents becomes large.

The next corollary shows that the NE strategy of each agent converges to its ANE strategy as the number of agents becomes large.
\begin{corollary}\label{Lem: N-MF}
Let $x^\star_{i,n}$ denote the NE strategy of agent $i$ in the game $\mathcal{G}_n$. Then, we have
\begin{eqnarray}
\lim_{n\rightarrow\infty} x^\star_{i,n}=x^\star_{i,\rm ANE}.\nonumber
\end{eqnarray}
\end{corollary}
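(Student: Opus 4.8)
The plan is to reduce the claim to a single application of the Lipschitz-in-$z$ property of the best response function, combined with Theorem \ref{Theo: MF}. The key observation is that both the finite-population NE strategy and the ANE strategy are values of the \emph{same} map ${\rm Br}\paren{\cdot,\alpha_i}$, evaluated at two different mean variables but at the \emph{identical} parameter $\alpha_i$. Concretely, the NE characterization gives $x^\star_{i,n}={\rm Br}\paren{z^\star_{i,n},\alpha_i}$, while the ANE is defined by $x^\star_{i,\rm ANE}={\rm Br}\paren{z^\star_{\rm AEM},\alpha_i}$.

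First I would subtract these two expressions and invoke Assumption 1 (Lipschitz continuity of the best response in $z$ with constant $L_z<1$), which yields
\[
\abs{x^\star_{i,n}-x^\star_{i,\rm ANE}}=\abs{{\rm Br}\paren{z^\star_{i,n},\alpha_i}-{\rm Br}\paren{z^\star_{\rm AEM},\alpha_i}}\leq L_z\abs{z^\star_{i,n}-z^\star_{\rm AEM}}.
\]
Since only the $z$-argument changes while $\alpha_i$ is held fixed, Assumption 2 is not needed for this step.

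Second, for a fixed index $i$ and any $n\geq i$, I would bound the right-hand side by the worst-case deviation over all agents, $\abs{z^\star_{i,n}-z^\star_{\rm AEM}}\leq\sup_{1\leq j\leq n}\abs{z^\star_{j,n}-z^\star_{\rm AEM}}$. Letting $n\to\infty$ and applying Theorem \ref{Theo: MF}, the supremum vanishes, forcing $\abs{x^\star_{i,n}-x^\star_{i,\rm ANE}}\to 0$, which is the desired conclusion; because $L_z<1$, the convergence is in fact at least as fast as that of the mean variables themselves.

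The step requiring a little care — rather than a genuine obstacle — is the passage from the uniform (triangular-array) convergence delivered by Theorem \ref{Theo: MF} to the pointwise, fixed-$i$ statement of the corollary. One must note that for each fixed $i$ the quantity $z^\star_{i,n}$ only exists once $n\geq i$, so the domination by the supremum is legitimate precisely along the tail $n\geq i$, which is all that the limit requires. No distributional or regularity assumptions beyond those already consumed in Theorem \ref{Theo: MF} enter the argument.
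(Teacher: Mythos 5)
Your proposal is correct and follows essentially the same route as the paper: the paper's proof is exactly the observation that Corollary \ref{Lem: N-MF} follows from Theorem \ref{Theo: MF} together with the continuity of ${\rm Br}\paren{z,\alpha_i}$ in $z$, and your argument simply makes this quantitative by replacing continuity with the Lipschitz bound $\abs{x^\star_{i,n}-x^\star_{i,\rm ANE}}\leq L_z\abs{z^\star_{i,n}-z^\star_{\rm AEM}}$. Your extra care about $n\geq i$ and the resulting convergence rate are fine refinements but do not change the substance of the argument.
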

\begin{proof}
The proof follows directly from Theorem \ref{Theo: MF} and the continuity of ${\rm Br}\paren{z,\alpha_i}$ in $z$.
\end{proof}
%Lemma \ref{Lem: N-MF} shows that the NE strategy of each agent converges to its N-MF strategy as the number of agents becomes large.
%Based on Lemma \ref{Lem: N-MF} and Theorem \ref{Theo: MF}, the asymptotic behavior of the NE strategies of agents can be studied using $z^\star_{\rm AEM}$. Next subsection addresses the computational and communication aspects of finding the $z^\star_{\rm AEM}$. %\ta{[assuming a certain communication and computing structure, right?]}

\section{Approximating AEM at the Processing Center}\label{Sec: PC}
Based on Theorem \ref{Theo: MF} and Corollary \ref{Lem: N-MF}, the AEM, \emph{i.e.,} $z^\star_{\rm AEM}$, can be used to approximate the NE strategies of agents in the game $\mathcal{G}_n$ as the number of agents  becomes large. However, to compute the AEM, the processing center requires $(i)$ the knowledge of the distribution function $P\paren{\alpha}$, $(ii)$ to solve the implicit equation \eqref{EQ: MF}. Even if the distribution function $P\paren{\alpha}$ is known at the processing center, the expectation on the right hand side of \eqref{EQ: MF} may not have a closed form expression, \emph{e.g.,} when the $P\paren{\alpha}$ and/or ${\rm Br}\paren{z,\alpha}$ have sophisticated forms.
In this section, we first propose a method for approximating the AEM when $P\paren{\alpha}$ is known at the processing center. We then consider the case that the processing center does not know $P_n\paren{\alpha}$ and propose two methods for approximating the AEM using the actual parameter values $\left\{\alpha_i\right\}_i$. Through this section, it is assumed that the processing center knows the general form of the utility functions.
\subsubsection{Approximating the AEM by Quantizing $P\paren{\alpha}$}

{Assume that the processing center has a priori information on $P\paren{\alpha}$, which may be due to domain knowledge or prior experiences, e.g. when solving a series of games over time with similar parameters. In such cases,
even if the solution is not exact, it can be used for example as a warm starting point as part of a multi-stage
solution process. However,}
 finding the exact solution of \eqref{EQ: MF} can be cumbersome when the best response function is non-linear in $\alpha$. Here, a quantization-based approximation method is proposed for computing an approximation of AEM. The proposed method replaces the integration of the best response function (with respect to the continuous distribution $P\paren{\alpha}$) by a summation {to ensure computational feasibility}.

 Let $Q\paren{\cdot}$ denote a quantization scheme for $P\paren{\alpha}$. Further, let ${\rm B}_m$  and $q_m$ denote the $m$th quantization cell and its corresponding cell center under the quantization scheme $Q\paren{\cdot}$. Next, we define an approximation of the AEM which is computed using the quantization scheme $Q\paren{\cdot}$.
\begin{definition}
Let $Q\paren{\cdot}$ be a quantization scheme for $P\paren{\alpha}$. Then, $Q\paren{\cdot}$-based approximation of ${z}^\star_{\rm AEM}$, denoted by $\hat{z}^\star_{\rm AEM}$,  is defined as the solution of the following equation
 \begin{align}\label{EQ: QAMF}
\hat{z}^\star_{\rm AEM}&=\sum_m{\rm Br}\paren{\hat{z}^\star_{\rm AEM},q_m}\PRP{{\rm B}_m}\nonumber\\
&=\ESI{\nu}{{\rm Br}\paren{\hat{z}^\star_{\rm AEM},Q\paren{\nu}}}
\end{align}
where $\nu$ is a random variable with the distribution $P\paren{\alpha}$.
\end{definition}
 Let the map $\hat{F}\paren{z}$ be defined as $\hat{F}\paren{z}=\ESI{\nu}{{\rm Br}\paren{z,Q\paren{\nu}}}$. Similar to the proof of Lemma \ref{Theo: MF-uniq}, it can be shown that $\hat{F}\paren{z}$ is a contraction and admits a unique fixed point. Thus, $\hat{z}^\star_{\rm AEM}$ can be computed using fixed point iterations. Next Proposition derives an upper bound on the distance between the $\hat{z}^\star_{\rm AEM}$ and ${z}^\star_{\rm AEM}$
\begin{proposition}\label{Lem: QAMF-E}
Let $Q\paren{\cdot}$ be a quantization scheme for $P\paren{\alpha}$ and $\hat{z}^\star_{\rm AEM}$ be the $Q\paren{\cdot}$-based approximation of $z^\star_{\rm AEM}$, \emph{i.e.,} the solution of \eqref{EQ: QAMF}. Then, we have
\begin{eqnarray}
\abs{{z}^\star_{\rm AEM}-\hat{z}^\star_{\rm AEM}}\leq \frac{L}{1-L_z}\ESI{\nu}{\abs{\nu-Q\paren{\nu}}}.
\end{eqnarray}
%where ${z}^\star_{\rm AEM}$ is the AME of the sequence of games $\left\{\mathcal{G}_n\right\}_n$.
\end{proposition}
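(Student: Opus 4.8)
The plan is to treat both $z^\star_{\rm AEM}$ and $\hat{z}^\star_{\rm AEM}$ as fixed points of two contraction maps that nearly coincide, and then to run the standard perturbed-fixed-point estimate. Define $F\paren{z}=\ESI{\nu}{{\rm Br}\paren{z,\nu}}$ and $\hat{F}\paren{z}=\ESI{\nu}{{\rm Br}\paren{z,Q\paren{\nu}}}$, so that $z^\star_{\rm AEM}=F\paren{z^\star_{\rm AEM}}$ by \eqref{EQ: MF} and $\hat{z}^\star_{\rm AEM}=\hat{F}\paren{\hat{z}^\star_{\rm AEM}}$ by \eqref{EQ: QAMF}. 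By Assumption 1, both maps are contractions with the \emph{same} modulus $L_z<1$ (this is precisely the estimate already established in the proof of Lemma \ref{Theo: MF-uniq}, and the text above notes the identical argument applies to $\hat{F}$).

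First I would insert the common anchor point $F\paren{\hat{z}^\star_{\rm AEM}}$ and apply the triangle inequality:
\begin{align}
\abs{z^\star_{\rm AEM}-\hat{z}^\star_{\rm AEM}}&=\abs{F\paren{z^\star_{\rm AEM}}-\hat{F}\paren{\hat{z}^\star_{\rm AEM}}}\nonumber\\
&\leq\abs{F\paren{z^\star_{\rm AEM}}-F\paren{\hat{z}^\star_{\rm AEM}}}+\abs{F\paren{\hat{z}^\star_{\rm AEM}}-\hat{F}\paren{\hat{z}^\star_{\rm AEM}}}.\nonumber
\end{align}
The contraction property of $F$ bounds the first term by $L_z\abs{z^\star_{\rm AEM}-\hat{z}^\star_{\rm AEM}}$, which I would move to the left-hand side to obtain $\paren{1-L_z}\abs{z^\star_{\rm AEM}-\hat{z}^\star_{\rm AEM}}\leq\abs{F\paren{\hat{z}^\star_{\rm AEM}}-\hat{F}\paren{\hat{z}^\star_{\rm AEM}}}$.

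The remaining task is to control the ``drift'' between the two maps evaluated at the \emph{same} argument. Writing both sides as expectations over the same $\nu\sim P\paren{\alpha}$ and using linearity of expectation, then passing the absolute value inside the expectation and invoking Assumption 2 pointwise in $\nu$, gives
\begin{align}
\abs{F\paren{\hat{z}^\star_{\rm AEM}}-\hat{F}\paren{\hat{z}^\star_{\rm AEM}}}&=\abs{\ESI{\nu}{{\rm Br}\paren{\hat{z}^\star_{\rm AEM},\nu}-{\rm Br}\paren{\hat{z}^\star_{\rm AEM},Q\paren{\nu}}}}\nonumber\\
&\leq\ESI{\nu}{\abs{{\rm Br}\paren{\hat{z}^\star_{\rm AEM},\nu}-{\rm Br}\paren{\hat{z}^\star_{\rm AEM},Q\paren{\nu}}}}\nonumber\\
&\leq L\,\ESI{\nu}{\abs{\nu-Q\paren{\nu}}}.\nonumber
\end{align}
Combining this with the previous inequality and dividing by $1-L_z>0$ yields the claimed bound $\abs{z^\star_{\rm AEM}-\hat{z}^\star_{\rm AEM}}\leq\frac{L}{1-L_z}\ESI{\nu}{\abs{\nu-Q\paren{\nu}}}$.

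I do not expect a genuine obstacle here beyond bookkeeping, since this is the canonical perturbed-fixed-point argument. The only points meriting care are (i) confirming that $F$ and $\hat{F}$ share the single contraction modulus $L_z$, so that exactly one factor $\paren{1-L_z}$ appears, and (ii) the interchange of absolute value and expectation, justified by $\abs{\ESI{\nu}{X}}\leq\ESI{\nu}{\abs{X}}$ together with boundedness of ${\rm Br}$ on $\sqparen{a,b}$. I anchor the split at $\hat{z}^\star_{\rm AEM}$ only for definiteness; anchoring instead at $z^\star_{\rm AEM}$ and using the contraction of $\hat{F}$ produces the identical final constant.
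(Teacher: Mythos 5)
Your proof is correct and is essentially the paper's own argument: both are the standard perturbed fixed-point estimate, splitting via the triangle inequality into a term controlled by the Lipschitz constant $L$ in $\alpha$ (the quantization drift) and a term controlled by the contraction modulus $L_z$ in $z$, then rearranging and dividing by $1-L_z$. The only cosmetic difference is the anchor point: the paper splits at ${\rm Br}\paren{z^\star_{\rm AEM},Q\paren{\nu}}$ (drift evaluated at $z^\star_{\rm AEM}$, contraction of the quantized map), while you split at $F\paren{\hat{z}^\star_{\rm AEM}}$ (drift evaluated at $\hat{z}^\star_{\rm AEM}$, contraction of the exact map) --- a mirror-image choice you yourself note yields the identical constant.
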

\begin{proof}
See Appendix \ref{App: QAMF-E}.
\end{proof}
Proposition \ref{Lem: QAMF-E} establishes an upper bound on the distance between $\hat{z}^\star_{\rm AEM}$ and the $z^\star_{\rm AEM}$ which depends on the Lipschitz constants $L,L_z$, and the quantization error. Thus, according to this proposition, the approximation error can be made arbitrarily small by increasing the number of quantization levels, { which trades-off computational requirements (memory, processing power) with accuracy.}

\subsubsection{Approximating the AEM Using $P_n\paren{\alpha}$}
The $Q\paren{\cdot}$-based approximation of $z^\star_{\rm AEM}$ requires that the knowledge of the distribution function $P\paren{\alpha}$ to be available at the processing center before solving the game. However, in many applications, the distribution $P\paren{\alpha}$ may not be known a priori. Here, an alternative method is proposed which approximates $z^\star_{\rm AEM}$ using the empirical distribution of $\left\{\alpha_i\right\}_i$, {based on the information received by the processing center from individual agents}. To this end, it is assumed that each agent $i$ transmits its parameter $\alpha_i$ to the the processing center. {Note that, as a starting point, it is assumed here that the agents send real-valued $\alpha$ values.} Upon receiving $\left\{\alpha_i\right\}_i$, the processing center will compute $P_n\paren{\alpha}$.

\begin{definition}
Consider the game $\mathcal{G}_n$ in which the empirical distribution of $\left\{\alpha_i\right\}_i$ is given by $P_n\paren{\alpha}$. Then, the $P_n\paren{\alpha}$-based approximation of the $z^\star_{\rm AEM}$, denoted by $\hat{z}^\star_{n,\rm AEM}$, is defined as the solution of the following equation
 \begin{align}\label{EQ: PAMF}
\hat{z}^\star_{n,\rm AEM}&=\frac{1}{n}\sum_{i=1}^n{\rm Br}\paren{\hat{z}^\star_{n,\rm AEM},\alpha_i}\nonumber\\
&=\ESI{\hat{\nu}_n}{{\rm Br}\paren{\hat{z}^\star_{n,\rm AEM},\hat{\nu}_n}}
\end{align}
where $\hat{\nu}_n$ is an auxiliary random variable distributed over $\left\{\alpha_i\right\}_{i=1}^n$ according to $P_n\paren{\alpha}$ and independent of $\nu$.
\end{definition}
It is straight forward to verify that the solution of \eqref{EQ: PAMF} is unique and can be found using a fixed point iteration. Note that $\hat{z}^\star_{n,\rm AEM}$ depends on $n$ and changes as the number of agents varies. Next Proposition derives an upper bound on the distance between $z^\star_{\rm AEM}$ and $\hat{z}^\star_{n,\rm AEM}$.
\begin{proposition}\label{Lem: PAMF}
Let $\hat{z}^\star_{n,\rm AEM}$ denote the $P_n\paren{\alpha}$-based approximation of  $z^\star_{\rm AEM}$.  Then, we have
\begin{eqnarray}
\abs{\hat{z}^\star_{n,\rm AEM}-z^\star_{\rm AEM}}\leq \frac{L}{1-L_z}\ESI{\nu,\hat{\nu}_n}{\abs{\nu-\hat{\nu}_n}}.
\end{eqnarray}
where $\nu$ and $\hat{\nu}_n$ are two independent random variables distributed according to $P\paren{\alpha}$ and $P_n\paren{\alpha}$, respectively.
\end{proposition}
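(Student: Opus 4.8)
The plan is to exploit the two fixed-point characterizations directly, in the same spirit as the proof of Proposition~\ref{Lem: QAMF-E}. By definition, $z^\star_{\rm AEM}=\ESI{\nu}{{\rm Br}\paren{z^\star_{\rm AEM},\nu}}$ and $\hat{z}^\star_{n,\rm AEM}=\ESI{\hat{\nu}_n}{{\rm Br}\paren{\hat{z}^\star_{n,\rm AEM},\hat{\nu}_n}}$. Since $\nu$ and $\hat{\nu}_n$ are independent, I would first promote each of these marginal expectations to a single expectation over the joint (product) law of the pair $\paren{\nu,\hat{\nu}_n}$, which leaves the value of each unchanged, so that
\begin{align}
\abs{\hat{z}^\star_{n,\rm AEM}-z^\star_{\rm AEM}}=\abs{\ESI{\nu,\hat{\nu}_n}{{\rm Br}\paren{\hat{z}^\star_{n,\rm AEM},\hat{\nu}_n}-{\rm Br}\paren{z^\star_{\rm AEM},\nu}}}.\nonumber
\end{align}

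The key step is a telescoping decomposition inside the expectation: I would add and subtract ${\rm Br}\paren{\hat{z}^\star_{n,\rm AEM},\nu}$, splitting the integrand into one term that varies only in the parameter argument, namely ${\rm Br}\paren{\hat{z}^\star_{n,\rm AEM},\hat{\nu}_n}-{\rm Br}\paren{\hat{z}^\star_{n,\rm AEM},\nu}$, and one that varies only in the mean-variable argument, namely ${\rm Br}\paren{\hat{z}^\star_{n,\rm AEM},\nu}-{\rm Br}\paren{z^\star_{\rm AEM},\nu}$. Moving the absolute value inside the expectation by the triangle inequality and then applying Assumption~2 to the first term and Assumption~1 to the second gives
\begin{align}
\abs{\hat{z}^\star_{n,\rm AEM}-z^\star_{\rm AEM}}\leq L\,\ESI{\nu,\hat{\nu}_n}{\abs{\hat{\nu}_n-\nu}}+L_z\,\ESI{\nu,\hat{\nu}_n}{\abs{\hat{z}^\star_{n,\rm AEM}-z^\star_{\rm AEM}}}.\nonumber
\end{align}

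The final step uses that $\hat{z}^\star_{n,\rm AEM}$ and $z^\star_{\rm AEM}$ are deterministic fixed points, so the second expectation reduces to $L_z\abs{\hat{z}^\star_{n,\rm AEM}-z^\star_{\rm AEM}}$; collecting this term on the left and dividing by $1-L_z>0$ (legitimate since $L_z<1$ by Assumption~1) yields the claimed bound. I do not anticipate a serious obstacle, as the argument is structurally identical to Proposition~\ref{Lem: QAMF-E}, with the deterministic map $Q\paren{\cdot}$ replaced by the second independent random variable $\hat{\nu}_n$. The one point requiring genuine care is the opening manipulation: it is precisely the independence of $\nu$ and $\hat{\nu}_n$ that lets both fixed-point equations be compared term-by-term under a single joint integral, and it is this that produces the coupled quantity $\ESI{\nu,\hat{\nu}_n}{\abs{\nu-\hat{\nu}_n}}$ rather than a single-variable moment. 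If that joint first moment is infinite the inequality is trivially true, so no integrability hypothesis beyond the stated assumptions is needed for the bound to hold.
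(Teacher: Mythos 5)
Your proposal is correct and follows essentially the same route as the paper: both exploit the two fixed-point equations, use independence to pass to a single joint expectation $\ESI{\nu,\hat{\nu}_n}{\cdot}$, and then run the add-and-subtract Lipschitz argument of Proposition~\ref{Lem: QAMF-E} before rearranging and dividing by $1-L_z$. The only (immaterial) difference is the choice of intermediate term in the telescoping step --- you insert ${\rm Br}\paren{\hat{z}^\star_{n,\rm AEM},\nu}$ whereas the paper's pattern inserts the term with the other fixed point held constant --- which is a symmetric variant of the identical idea.
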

\begin{proof}
Please see Appendix \ref{App: PAMF}.
\end{proof}
According to Proposition \ref{Lem: PAMF}, the error of the $P_n\paren{\alpha}$-based method is upper bounded by the $L_1$ distance between the random variables $\nu$ and $\hat{\nu}_n$. Note that $\hat{\nu}_n$ converges in distribution to $\nu$ since $P_n\paren{\alpha}$ converges to $P\paren{\alpha}$ as $n$ becomes large. Thus, under mild regularity assumptions, \emph{e.g.,} uniform integrability of $\left\{\hat{\nu}_n\right\}_n$, $\hat{\nu}_n$ converges in mean to $\nu$ \cite{PB95}. This implies that the distance between  $z^\star_{\rm AEM}$ and $\hat{z}^\star_{n,\rm AEM}$ will be arbitrarily small as the number of agents becomes large if the sequence of distribution functions  $\left\{P_n\paren{\alpha}\right\}_n$ is well-behaved.

The next Proposition studies the closeness of $\hat{z}^\star_{n,\rm AEM}$ and the triangular array  of (equilibrium) mean variables $\left\{z^\star_{i,n}, 1\leq i\leq n\right\}_n$.
\begin{proposition}\label{Lem: U-PMFA}
Consider the non-cooperative game $\mathcal{G}_n$. Then, the distance between the (equilibrium) mean variables $\left\{z^\star_{i,n}\right\}_i$ and the solution of the $P_n\paren{\alpha}$-based approximation can be uniformly bounded as
\begin{eqnarray}
\max_{1\leq i\leq n}\abs{\hat{z}^\star_{n,\rm AEM}-{z}^\star_{i,n}}=\BO{\frac{1}{n}}.
\end{eqnarray}
\end{proposition}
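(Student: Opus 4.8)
The plan is to exploit the fact that the defining equation for $\hat{z}^\star_{n,\rm AEM}$ averages the best responses over \emph{all} $n$ agents, whereas each equilibrium mean $z^\star_{i,n}$ is a \emph{leave-one-out} average over the remaining $n-1$ agents; since every action lies in the compact interval $[a,b]$, this discrepancy can contribute at most an $\BO{1/n}$ term.

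First I would introduce the full empirical mean of the equilibrium actions, $\bar{z}_n=\frac{1}{n}\sum_{j=1}^n x^\star_{j,n}$, and relate it to the leave-one-out means. Writing $S_n=\sum_{j=1}^n x^\star_{j,n}$ so that $(n-1)z^\star_{i,n}=S_n-x^\star_{i,n}$, a short manipulation gives the identity $z^\star_{i,n}-\bar{z}_n=\paren{\bar{z}_n-x^\star_{i,n}}/(n-1)$. Because $\bar{z}_n$ and each $x^\star_{i,n}$ lie in $[a,b]$, this yields the uniform clustering bound $\abs{z^\star_{i,n}-\bar{z}_n}\leq (b-a)/(n-1)$ for every $i$, so all the leave-one-out means lie within $\BO{1/n}$ of their common average.

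Second, I would show that $\bar{z}_n$ almost satisfies the same fixed-point equation as $\hat{z}^\star_{n,\rm AEM}$. Using $x^\star_{i,n}={\rm Br}\paren{z^\star_{i,n},\alpha_i}$ at the NE gives $\bar{z}_n=\frac{1}{n}\sum_i {\rm Br}\paren{z^\star_{i,n},\alpha_i}$, while by definition $\hat{z}^\star_{n,\rm AEM}=\frac{1}{n}\sum_i {\rm Br}\paren{\hat{z}^\star_{n,\rm AEM},\alpha_i}$. Subtracting and applying Assumption 1 term by term gives $\abs{\bar{z}_n-\hat{z}^\star_{n,\rm AEM}}\leq L_z\max_i\abs{z^\star_{i,n}-\hat{z}^\star_{n,\rm AEM}}$. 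Bounding the right-hand side via the triangle inequality together with the clustering bound from the first step produces a self-referential inequality in $\abs{\bar{z}_n-\hat{z}^\star_{n,\rm AEM}}$; solving it (this is where $L_z<1$ is indispensable) yields $\abs{\bar{z}_n-\hat{z}^\star_{n,\rm AEM}}\leq \frac{L_z}{1-L_z}\cdot\frac{b-a}{n-1}$.

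Finally, a single triangle inequality combining the two estimates gives $\max_{1\leq i\leq n}\abs{\hat{z}^\star_{n,\rm AEM}-z^\star_{i,n}}\leq \frac{b-a}{(n-1)(1-L_z)}=\BO{1/n}$, as claimed. The only real subtlety is the careful bookkeeping of the leave-one-out structure in the first step, which is precisely what forces the $1/(n-1)$ factor and lets the compactness of $[a,b]$ do the work; once that clustering estimate is in hand, the contraction argument mirrors the proof of Lemma \ref{Theo: MF-uniq} and is routine.
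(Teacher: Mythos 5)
Your proof is correct, and it reaches the paper's conclusion through a slightly different decomposition. The paper works per agent: it writes $z^\star_{i,n}-\hat{z}^\star_{n,\rm AEM}$ as the difference of the leave-one-out best-response average $\frac{1}{n-1}\sum_{j\neq i}{\rm Br}\paren{z^\star_{j,n},\alpha_j}$ and the full average $\frac{1}{n}\sum_j{\rm Br}\paren{\hat{z}^\star_{n,\rm AEM},\alpha_j}$, then splits this into three terms — (i) replacing each $z^\star_{j,n}$ by $z^\star_{i,n}$, controlled by Lipschitzness in $z$ together with the pairwise clustering bound $\abs{z^\star_{j,n}-z^\star_{i,n}}\leq\frac{2\max\paren{\abs{a},\abs{b}}}{n-1}$; (ii) the leave-one-out versus full-average discrepancy of the bounded function ${\rm Br}\paren{z^\star_{i,n},\cdot}$, which is $\BO{1/n}$; and (iii) the contraction term $L_z\abs{z^\star_{i,n}-\hat{z}^\star_{n,\rm AEM}}$ — and then solves the resulting self-referential inequality separately for each $i$, obtaining the bound $\frac{2\max\paren{\abs{a},\abs{b}}}{1-L_z}\paren{\frac{L_z}{n-1}+\frac{1}{n}}$. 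You instead introduce the full empirical mean $\bar{z}_n$ of the equilibrium actions as a single pivot, use the exact identity $z^\star_{i,n}-\bar{z}_n=\paren{\bar{z}_n-x^\star_{i,n}}/\paren{n-1}$ to cluster all leave-one-out means around $\bar{z}_n$, observe that $\bar{z}_n$ is an approximate fixed point of the map defining $\hat{z}^\star_{n,\rm AEM}$, and solve one scalar self-referential inequality for $\abs{\bar{z}_n-\hat{z}^\star_{n,\rm AEM}}$. The ingredients are identical in both arguments — compactness of $\left[a,b\right]$, Lipschitzness of ${\rm Br}$ in $z$, and crucially $L_z<1$ — but your pivot buys cleaner bookkeeping (no separate leave-one-out correction term, one inequality instead of $n$) and a marginally sharper constant, $\frac{b-a}{\paren{1-L_z}\paren{n-1}}$ versus the paper's $\frac{2\max\paren{\abs{a},\abs{b}}}{1-L_z}\paren{\frac{L_z}{n-1}+\frac{1}{n}}$, since $b-a\leq 2\max\paren{\abs{a},\abs{b}}$; the paper's version is more direct in that it never needs the auxiliary quantity $\bar{z}_n$.
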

\begin{proof}
See Appendix \ref{App: U-PMFA}.
\end{proof}
Proposition \ref{Lem: U-PMFA} implies that, for $n$ large enough, the mean variables $\left\{z^\star_{i,n}\right\}_i$ of the game $\mathcal{G}_n$ can be well approximated with $\hat{z}^\star_{n,\rm AEM}$. Note that Proposition \ref{Lem: U-PMFA} is proved without assuming that the sequence of empirical distribution functions $\left\{P_n\paren{\alpha}\right\}_n$ is convergent. Thus, according to this proposition, $\hat{z}^\star_{n,\rm AEM}$ can be used to obtain an accurate approximation of mean variables even if  the limiting distribution $P\paren{\alpha}$ in unknown or  it does not exist, \emph{i.e.,} when the sequence $\left\{P_n\paren{\alpha}\right\}_n$ is not convergent.

\subsubsection{Approximating AEM Using Quantized Prameters}
In order to compute the $P_n\paren{\alpha}$-based approximation, the processing center requires the knowledge of $\left\{\alpha_i\right\}_i$. Since the parameters $\left\{\alpha_i\right\}_i$ are real valued, transmitting each $\alpha_i$ with high accuracy will impose a large communication cost on each agent as well as a large storage cost on the processing  center.  Further, the total communication  (storage) cost of the $P_n\paren{\alpha}$-based method increases  linearly with the number of agents. Hence, the $P_n\paren{\alpha}$-based approximation will become impractical  in large population games especially when $P_n\paren{\alpha}$ changes regularly with time, \emph{e.g.,}  when agents regularly enter or leave the game.

 To {address} this problem, an approximation method is proposed in which the processing center computes an approximate AEM using quantized values of $\left\{\alpha_i\right\}_i$. Under the proposed method, each agent $i$ transmits the quantized version of its $\alpha_i$ to the processing center. Then, the processing center computes an approximation of $z^\star_{\rm AEM}$ using the empirical distribution of the quantized $\left\{\alpha_i\right\}_i$.  We refer to this approximation method as $\alpha_i$-quantized approximation. To this end, consider a quantization scheme on $\R$ with $k$ quantization cells denoted by $\left\{C_i\right\}_{i=1}^k$, \emph{i.e.,} $\left\{C_i\right\}_{i=1}^k$ form a non-overlapping partitioning of the real line. Also, let ${c}_i\in C_i$ denote the representative of the cell ${C}_i$.  Under the $\alpha_i$-quantized approximation method, the agent $i$ transmits $c_i$ to the processing center if $\alpha_i$ belongs to $C_i$. Thus, each agent only needs to communicate $\log_2k$ bits to the processing center in order to convey its quantized parameter. We assume that each $C_i$ is right-closed, thus $\sup C_i$ belongs to $C_i$.  %Then, the processing center computes the empirical distribution of the quantized $\alpha_i$s.  %Since the distribution function $\tilde{P}_n\paren{x}$ is defined using $k$ points (where as $P_n\paren{x}$ is defined using $n$ points), the storage and communication costs of finding the approximate AME reduces by $\frac{k}{n}$ when the ${C}$-clustered method is used instead of the $P_n\paren{x}$-based method.

To study the performance of the $\alpha_i$-quantized approximation method, it is helpful to introduce two auxiliary random variables $\hat{\nu}_n$ and $\tilde{\nu}_n$. Let $\hat{\nu}_n$  be a random variable distributed over $\left\{\alpha_i\right\}_{i=1}^n$ according to a uniform distribution. The random variable $\tilde{\nu}_n$, which is distributed over $\left\{{c}_i\right\}_{i=1}^k$, is constructed using $\hat{\nu}_n$ as follows
\begin{align}
\tilde{\nu}_n={c}_i \quad \textit{if $\hat{\nu}_n\in{C}_i$}
\end{align}
Note that the distribution function of $\tilde{\nu}_n$, denoted as $\tilde{P}_n\paren{x}$,  can be written as
\begin{align}
\tilde{P}_n\paren{x}=\sum_{j=1}^k\I{{c}_j\leq x<{c}_{j+1}}\frac{1}{n}\sum_{i=1}^n\I{\alpha_i\leq \sup C_j}\nonumber
\end{align}
where ${c}_{k+1}=\infty$.

Next the notion of $\alpha_i$-quantized approximation of $z^\star_{\rm AEM}$ is defined.
\begin{definition}
The $\alpha_i$-quantized  approximation of $z^\star_{\rm AEM}$, denoted by $\tilde{z}^\star_{n,\rm AEM}$, is defined as the solution of the following equation
 \begin{align}\label{EQ: CMFA}
\tilde{z}^\star_{n,\rm AEM}&=\sum_{j=1}^k{\rm Br}\paren{\tilde{z}^\star_{n,\rm AEM},{c}_j}\frac{1}{n}\sum_{i=1}^n\I{\alpha_i\in{C}_j}\nonumber\\
&=\ESI{\tilde{\nu}_n}{{\rm Br}\paren{\tilde{z}^\star_{n,\rm AEM},\tilde{\nu}_n}}\nonumber
\end{align}
where $\sum_i\I{\alpha_i\in{C}_j}$ is the number of agents which transmit $c_j$ to the processing center.
\end{definition}
To compute $\tilde{z}^\star_{n,\rm AEM}$, the processing center requires to store $\left\{c_j\right\}_{j=1}^k$ and  $\left\{\sum_{i=1}^n\I{\alpha_i\in C_j}\right\}_{j=1}^k$ which can be stored with at most $k\log_2k+k\log_2 n$ bits. However, to compute $\hat{z}^\star_{n,\rm AEM}$, the processing center needs to store $n$ real variables. %\

 Next Proposition studies the closeness of the solutions of the $\alpha_i$-quantized and $P_n\paren{\alpha}$-based methods.
\begin{proposition}\label{Lem: CMFA}
Let $\tilde{z}^\star_{n,\rm AEM}$ denote the solution of the $\alpha_i$-quantized approximation method. Then, we have
\begin{eqnarray}
\abs{\tilde{z}^\star_{n,\rm AEM}-\hat{z}^\star_{n,\rm AEM}}\leq \frac{L}{1-L_z}\ESI{\hat{\nu}_n}{\abs{\tilde{\nu}_n-\hat{\nu}_n}}.
\end{eqnarray}
where the expectation is with respect to the distribution of $\hat{\nu}_n$ as $\tilde{\nu}_n$ is a function of $\hat{\nu}_n$.
\end{proposition}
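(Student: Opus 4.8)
The plan is to mirror the contraction-error arguments already used in Propositions \ref{Lem: QAMF-E} and \ref{Lem: PAMF}, exploiting the crucial structural fact that $\tilde{\nu}_n$ is a \emph{deterministic} function of $\hat{\nu}_n$ (namely $\tilde{\nu}_n=c_i$ whenever $\hat{\nu}_n\in C_i$). Because of this dependence, both defining fixed-point equations \eqref{EQ: PAMF} and \eqref{EQ: CMFA} can be written as expectations over the single underlying distribution of $\hat{\nu}_n$: I would express $\hat{z}^\star_{n,\rm AEM}=\ESI{\hat{\nu}_n}{{\rm Br}\paren{\hat{z}^\star_{n,\rm AEM},\hat{\nu}_n}}$ and $\tilde{z}^\star_{n,\rm AEM}=\ESI{\hat{\nu}_n}{{\rm Br}\paren{\tilde{z}^\star_{n,\rm AEM},\tilde{\nu}_n}}$, where in the latter $\tilde{\nu}_n$ is read as the function of $\hat{\nu}_n$ prescribed by the quantization rule. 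This is the one step that genuinely requires care, and it is the main obstacle: it is exactly what lets me place both best-response evaluations under a \emph{common} expectation so that a pointwise triangle inequality becomes legitimate.

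Having reduced to a single expectation, I would subtract the two equations, bring the absolute value inside, and obtain $\abs{\tilde{z}^\star_{n,\rm AEM}-\hat{z}^\star_{n,\rm AEM}}\leq \ESI{\hat{\nu}_n}{\abs{{\rm Br}\paren{\tilde{z}^\star_{n,\rm AEM},\tilde{\nu}_n}-{\rm Br}\paren{\hat{z}^\star_{n,\rm AEM},\hat{\nu}_n}}}$. Next I insert the intermediate term ${\rm Br}\paren{\hat{z}^\star_{n,\rm AEM},\tilde{\nu}_n}$ and split the integrand by the triangle inequality into a piece that differs only in its $z$-argument and a piece that differs only in its $\alpha$-argument. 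Assumption 1 (Lipschitz in $z$ with constant $L_z<1$) bounds the first piece by $L_z\abs{\tilde{z}^\star_{n,\rm AEM}-\hat{z}^\star_{n,\rm AEM}}$, and Assumption 2 (Lipschitz in $\alpha$ with constant $L$) bounds the second by $L\abs{\tilde{\nu}_n-\hat{\nu}_n}$.

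Taking expectations over $\hat{\nu}_n$ and noting that $\abs{\tilde{z}^\star_{n,\rm AEM}-\hat{z}^\star_{n,\rm AEM}}$ is a deterministic constant (hence unaffected by the expectation) yields $\abs{\tilde{z}^\star_{n,\rm AEM}-\hat{z}^\star_{n,\rm AEM}}\leq L_z\abs{\tilde{z}^\star_{n,\rm AEM}-\hat{z}^\star_{n,\rm AEM}}+L\,\ESI{\hat{\nu}_n}{\abs{\tilde{\nu}_n-\hat{\nu}_n}}$. Rearranging and dividing by $1-L_z>0$ then delivers the claimed bound. Everything after the reduction to a common expectation is routine; the only load-bearing observation is the functional dependence $\tilde{\nu}_n=\tilde{\nu}_n(\hat{\nu}_n)$, which is what makes the clean separation into the two Lipschitz terms valid and which is also why the final expectation is taken with respect to $\hat{\nu}_n$ alone.
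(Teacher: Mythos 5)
Your proposal is correct and is essentially the paper's own argument: the paper writes both fixed-point equations as a common double sum $\frac{1}{n}\sum_{j=1}^k\sum_{i=1}^n(\cdot)\I{\alpha_i\in C_j}$ (which is exactly your common expectation over $\hat{\nu}_n$ with $\tilde{\nu}_n$ read as a function of $\hat{\nu}_n$), inserts the same intermediate term ${\rm Br}\paren{\hat{z}^\star_{n,\rm AEM},c_j}$, applies the two Lipschitz bounds, and rearranges using $L_z<1$. The only difference is notational — explicit sums and indicators versus abstract expectation — so no further comparison is needed.
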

\begin{proof}
See Appendix \ref{App: CMFA}.
\end{proof}
According to the Proposition \ref{Lem: CMFA}, the distance between the solutions of the $P_n\paren{\alpha}$-based and $\alpha_i$-quantized methods is controlled by $L$, $L_z$ and the $L_1$ distance between the random variables $\hat{\nu}_n$ and $\tilde{\nu}_n$. Similar to $P_n\paren{\alpha}$-based AEM approximation, the  $\alpha_i$-quantized AEM approximation method does not impose any assumption on the convergence of $\left\{P_n\paren{\alpha}\right\}_n$. Thus, this approximation method can be used to obtain an approximation of the NE when the distribution function $P\paren{\alpha}$ is unknown or does not exists. We note that one can combine the result of Proposition \ref{Lem: CMFA} with that in Propositions \ref{Lem: PAMF}, to obtain a bound on the difference between $\tilde{z}^\star_{n,\rm AEM}$ and AEM.

%Under the $\alpha_i$-based approximation method, each agent needs to transmit $\log_2 k$ bits to the processing center. Also, the processing center needs to store  requires each $\left\{c_j\right\}_{j=1}^k$ and  $\left\{\sum_{i=1}^n\I{\alpha_i\in C_j}\right\}_{j=1}^k$ which can be stored with at most $k\log_2k+k\log_2 n$ bits. %\ta{[can we further express this in terms of bits/agent, making the communication aspect even more precise, e.g. in the special case of uniform quantization?]}

%%%%%%%%%%%%%%%%%%%%%%%%%%%%%%%%%%%%%%%%%%%%%%%%%%%%%%%%%%%%%%%%%%%%%%%%%%%%%%%%
\section{Numerical Results}\label{Sec: NR}
In this section, we consider a non-cooperative game and numerically analyze the proximity of the equilibrium behavior of agents with the asymptotic equilibrium mean (AEM) and the solution of the $P_n\paren{\alpha}$-based approximation method.  To this end, consider a non-cooperative game in which the utility function of agent $i$ is given by
\begin{align}
\utility=-\paren{x_i-\sqrt{z_{i,n}^2+\alpha_i^2}}^2-2x_i\nonumber.
\end{align}
%\ta{[This is quite an exotic utility function, why use this? Would more common ones such as $\alpha_i \log(x_i)$ not work? If so, we can make a remark here saying that other such utilities would work as well. Reviewers may ask where this come from or wonder if our results are restrictive to exotic utilities...]}
It is assumed that the empirical distribution of the parameters $\left\{\alpha_i\right\}_i$ converges to the distribution function of a Gaussian random variable with zero mean and of variance 4. The action space of each agent is limited to the interval $\left[0.5,20\right]$.

Fig. \ref{F1} shows the Nash equilibrium (NE) strategy of Agent 1  and its corresponding equilibrium mean variable as a function of the number of agents. According to Fig.\ref{F1}, the asymptotic Nash equilibrium strategy of Agent 1   and the AEM characterize the equilibrium behavior of Agent 1  as $n$ becomes large. More precisely, based on this figure, the NE strategy of Agent 1  converges to its asymptotic Nash equilibrium strategy, \emph{i.e.,} $x^\star_{1,\rm ANE}$, as the number of agents becomes large. Similarly, the equilibrium mean variable of Agent 1  converges to the AEM as depicted in Fig. \ref{F1}.
\begin{figure}[!t]
\centering{\includegraphics[scale=0.44]{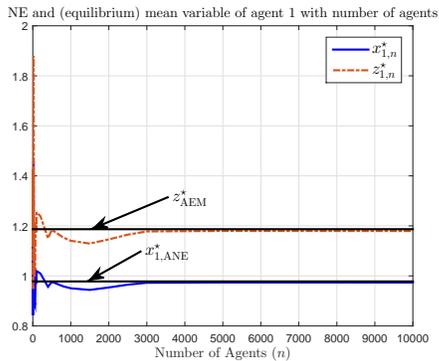}}
\caption{The behavior of the Nash equilibrium strategy and equilibrium mean variable of Agent 1  with the number of agents.} \label{F1}
\end{figure}
\begin{figure}[!t]
\centering{\includegraphics[scale=0.6]{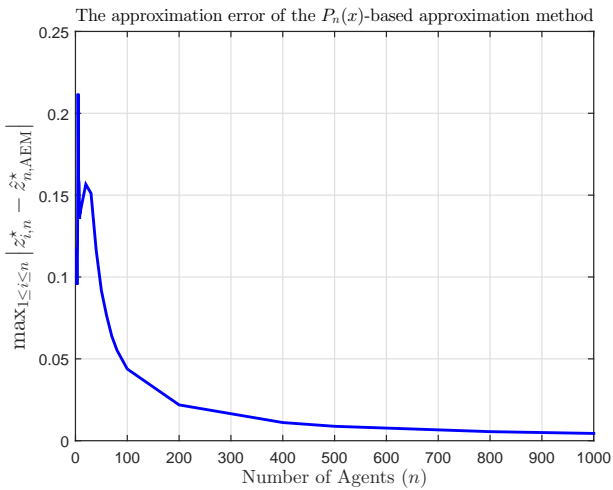}}
\caption{The error of the $P_n\paren{\alpha}$-based approximation method with the number of agents.} \label{F2}
\end{figure}

Fig. \ref{F2} illustrates the error of the $P_n\paren{\alpha}$-based approximation method, \emph{i.e.,} $\max_{1\leq i\leq n}\abs{z^\star_{i,n}-\hat{z}^\star_{n,\rm AEM}}$, with the number of agents. According to this figure, the error of the $P_n\paren{\alpha}$-based approximation method decays to zero as the number of agents becomes large. Moreover, based on this figure, the $P_n\paren{\alpha}$-based approximation method can be used to obtain an accurate (uniform) approximation of the equilibrium mean variables of agents even for moderate number of agents.%{\bf a discussion/comparison on the storage/communication costs between the last two methods.}

%
%\ta{[Maybe something on bits/agent here?]}

\section{CONCLUSIONS}\label{Sec: Conc}
In this paper, we studied the equilibrium behavior of agents in a class of large-scale heterogeneous mean-field games in which the utility function of each agent depends on its action, average of other agents' actions and a fixed parameter. First, it was shown that the mean variables of all agents converge to a constant, named asymptotic equilibrium mean (AEM), as the number of agents becomes large. Then, the problem of approximating the AEM at a processing center was considered and three approximation methods were proposed which allow the processing center to approximate the AEM under limited computational and communication resources.

%%%%%%%%%%%%%%%%%%%%%%%%%%%%%%%%%%%%%%%%%%%%%%%%%%%%%%%%%%%%%%%%%%%%%%%%%%%%%%%%
\appendices
\section{Proof of Lemma \ref{Theo: Nash-uniq}}\label{App: Nash-uniq}
Lemma \ref{Theo: Nash-uniq} is proved by showing that the best response function of the game $\mathcal{G}_n$ forms a contraction map in the infinity norm. To this end, let $X=\left[x_1,\cdots,x_n\right]^\top$ and $X^\prime=\left[x^\prime_1,\cdots,x^\prime_n\right]^\top$ be two $n$-dimensional vectors. Also, let $Z=\left[z_{i,n}\right]_i$, and $Z^\prime=\left[z^\prime_{i,n}\right]_i$ where $z_{i,n}=\frac{1}{n-1}\sum_{j\neq i}x_j$ and $z^\prime_{i,n}=\frac{1}{n-1}\sum_{j\neq i}x^\prime_j$.
%\ta{This is the unconstrained case! We have imposed constraints. This would still hold under convex, compact constraints under projection theorems, see Bertsekas.}
Let $G\paren{X}: \left[a,b\right]^n\mapsto\left[a,b\right]^n$ be the best response of agents to $X\in\left[a,b\right]^n$, \emph{i.e.,} $G\paren{X}=\left[{\rm Br}\paren{z_{i,n},\alpha_i}\right]_i$. Note that ${Z-Z^\prime}$ can be written as
\begin{align}
{Z-Z^\prime}=\frac{1}{n-1}\paren{\1_n-I_n}\paren{X-X^\prime}
\end{align}
where $\1_n$ is an $n$-by-$n$ matrix with all entries equal to one and $I_n$ is an $n$-by-$n$ identity matrix. Thus, we have $\norm{Z-Z^\prime}{\infty}\leq \norm{X-X^\prime}{\infty}$. The infinity norm of  $G\paren{X}-G\paren{X^\prime}$ can be upper bounded as
 \begin{align}
\norm{G\paren{X}-G\paren{X^\prime}}{\infty}&=\max_i\abs{{\rm Br}\paren{z_{i,n},\alpha_i}-{\rm Br}\paren{z^\prime_{i,n},\alpha_i}}\nonumber\\
&\stackrel{(a)}{\leq} L_z \max_i\abs{z_{i,n}-z^\prime_{i,n}}\nonumber\\
&=L_z\norm{Z-Z^\prime}{\infty}\nonumber\\
&\leq L_z\norm{X-X^\prime}{\infty}\nonumber
\end{align}
where $(a)$ follows from the fact that the best response function is Lipschitz in $z$. This implies that the $G\paren{X}$ is a contraction as $L_z< 1$, hence, it admits a unique fixed point \cite{BT97}. Since Nash equilibria are fixed points of the best response function, the game $\mathcal{G}_n$ admits a unique NE.
%%%%%%%%%%%%%%%%%%%%%%%%%%%%%%%%%%%%%%%%%%%%%%%%%%%%%%%%%%%%%%%%%%%%%%%%%%%%%%%%%%%%%%
\section{Proof of Lemma \ref{Theo: MF-uniq}}\label{App: MF-uniq}
To prove this lemma, let $F\paren{z}=\ESI{\nu}{{\rm Br}\paren{z,\nu}}$. We show that $F\paren{z}:\left[a,b\right]\mapsto\left[a,b\right]$ is a contraction. %\ta{[Again, it should be $X\mapsto X$, $X$ being the hypercube or n-orthotope(hyperrectangle) chosen]}
Note that $\abs{F\paren{z}-F\paren{z^\prime}}$ can be upper bounded as
\begin{align}
\abs{F\paren{z}-F\paren{z^\prime}}&=\abs{\ESI{\nu}{{\rm Br}\paren{z,\nu}}-\ESI{\nu}{{\rm Br}\paren{z^\prime,\nu}}}\nonumber\\
&\leq \ESI{\nu}{\abs{{\rm Br}\paren{z,\nu}-{\rm Br}\paren{z^\prime,\nu}}}\nonumber\\
&\leq \ESI{\nu}{L_z\abs{z-z^\prime}}\nonumber\\
&=L_z\abs{z-z^\prime}\nonumber
\end{align}
Thus, the map $F\paren{z}$ has a unique fixed point.

\pagebreak
%%%%%%%%%%%%%%%%%%%%%%%%%%%%%%%%%%%%%%%%%%%%%%%%%%%%%%%%%%%%%%%%%%%%%%%%%%%%%%%%%%
\section{Proof of Theorem \ref{Theo: MF}} \label{App: MF}
%\ta{[OK, I think you can easily apply this result to general convex compact constraints by
%replacing $2\max(\abs{a},\abs{b})$, which seems to be the maximum distance between two possible
%values? BTW, shouldn't it be simply $\abs{a}+\abs{b}$? \\
%Let $X$ be the convex compact constraint set with \textbf{maximum diameter } $d$. Then,
%$\abs{x^\star_{i,n}-x^\star_{i,n}}\leq d$ and all you need is to replace
%$\abs{a}+\abs{b}$ with $d$ in all the equations below in all the proofs, right? I don't see why this
%cannot be done?]}

First note that $\abs{z^\star_{i,n}-z^\star_{\rm AEM}}$ can be upper bounded as
\begin{align}\label{Eq: AMF0000}
\abs{z^\star_{i,n}-z^\star_{\rm AEM}}&\stackrel{(a)}{\leq} \abs{z^\star_{i,n}-z^\star_{1,n}}+\abs{z^\star_{1,n}-z^\star_{\rm AEM}}\nonumber\\
&\stackrel{(b)}{=}\frac{1}{n-1}\abs{x^\star_{i,n}-x^\star_{1,n}}+\abs{z^\star_{1,n}-z^\star_{\rm AEM}}\nonumber\\
&\stackrel{(c)}{\leq} \frac{2\max\paren{\abs{a},\abs{b}}}{n-1}+\abs{z^\star_{1,n}-z^\star_{\rm AEM}}
\end{align}
where $(a)$ follows from the triangle inequality, $(b)$ follows from the fact that $\abs{z^\star_{i,n}-z^\star_{1,n}}=\frac{1}{n-1}\abs{x^\star_{i,n}-x^\star_{1,n}}$ and $(c)$ follows from the fact that $a\leq x^\star_{i,n}\leq b$  for all $i$.
%\ta{[In (b) should it be maybe $i$, $1$, right now you are subtracting the same thing from itself?]}
 Next, we derive an expression for $z^\star_{\rm AEM}$. For $\delta>0$, let the set ${B}^\delta_j$ be defined as ${ B^\delta_j}=\left\{\paren{j-1}\delta< \nu\leq j\delta\right\}$ and $\bar{\nu}^\delta_j$ be a point in ${B}^\delta_j$. Next Lemma establishes an asymptotic expansion for $z^\star_{\rm AEM}$ in terms of $\delta$, ${B}^\delta_j$ and $\bar{\nu}^\delta_j$.
\begin{lemma}\label{Lem: AE-1}
For all $\delta>0$, $z^\star_{\rm AEM}$ can be written as
\begin{align}
z^\star_{\rm AEM}&=\sum_{j=-\infty}^\infty{\rm Br}\paren{z^\star_{\rm AEM},\bar{\nu}^\delta_j}\PRPI{\nu}{B^\delta_j}+\BO{L\delta}\nonumber
\end{align}
where $\PRPI{\nu}{B^\delta_j}=\PR{\paren{j-1}\delta< \nu\leq j\delta}$.
\end{lemma}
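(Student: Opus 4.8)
The plan is to start from the integral (expectation) definition of $z^\star_{\rm AEM}$ in \eqref{EQ: MF}, namely $z^\star_{\rm AEM}=\int_{-\infty}^\infty {\rm Br}\paren{z^\star_{\rm AEM},\alpha}\,dP\paren{\alpha}$, and to discretize the integral using the partition $\left\{B^\delta_j\right\}_{j\in\Z}$ of the real line, where $B^\delta_j=\left\{\paren{j-1}\delta<\nu\leq j\delta\right\}$. Since the cells are disjoint and cover $\R$, countable additivity lets me write the integral as the countable sum $\sum_{j=-\infty}^\infty \int_{B^\delta_j}{\rm Br}\paren{z^\star_{\rm AEM},\alpha}\,dP\paren{\alpha}$. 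The candidate approximation is obtained by freezing the best response at the representative point $\bar{\nu}^\delta_j\in B^\delta_j$ inside each cell, giving $\sum_{j=-\infty}^\infty{\rm Br}\paren{z^\star_{\rm AEM},\bar{\nu}^\delta_j}\PRPI{\nu}{B^\delta_j}$, and the whole content of the lemma is to show the replacement error is $\BO{L\delta}$.

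The key step is to bound the per-cell error using Assumption 2 (Lipschitz continuity of ${\rm Br}$ in $\alpha$ with constant $L$). For any $\alpha\in B^\delta_j$ we have $\abs{\alpha-\bar{\nu}^\delta_j}\leq\delta$ because both points lie in an interval of length $\delta$, hence
\begin{align}
\abs{{\rm Br}\paren{z^\star_{\rm AEM},\alpha}-{\rm Br}\paren{z^\star_{\rm AEM},\bar{\nu}^\delta_j}}\leq L\abs{\alpha-\bar{\nu}^\delta_j}\leq L\delta\nonumber
\end{align}
uniformly over the cell. Then I would write the difference between $z^\star_{\rm AEM}$ and its discretized version as $\sum_j\int_{B^\delta_j}\bigl[{\rm Br}\paren{z^\star_{\rm AEM},\alpha}-{\rm Br}\paren{z^\star_{\rm AEM},\bar{\nu}^\delta_j}\bigr]dP\paren{\alpha}$, apply the triangle inequality to move the absolute value inside the sum and integral, and invoke the pointwise bound above. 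This yields the estimate $L\delta\sum_j\PRPI{\nu}{B^\delta_j}=L\delta$, where the last equality uses that the cell probabilities sum to $1$ since $\left\{B^\delta_j\right\}_j$ partitions $\R$. Therefore the error is at most $L\delta$, which is exactly the claimed $\BO{L\delta}$ term.

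I do not expect a serious obstacle here; the argument is essentially a Riemann-sum estimate driven entirely by the Lipschitz assumption. The only points requiring a little care are the interchange of the absolute value with the infinite sum and the integral, and the absolute convergence of the series. Both are immediate from the fact that ${\rm Br}$ maps into the compact action set $\left[a,b\right]$, so the integrand is bounded and every term is dominated by $\sup\paren{\abs{a},\abs{b}}\PRPI{\nu}{B^\delta_j}$, a summable majorant; dominated convergence (or simply Tonelli for the nonnegative error integrand) then justifies all the manipulations. Crucially, the bound $L\delta$ is independent of the choice of representatives $\bar{\nu}^\delta_j$ and of $z^\star_{\rm AEM}$ itself, which is what makes the expansion useful when it is later combined with \eqref{Eq: AMF0000} to control $\abs{z^\star_{1,n}-z^\star_{\rm AEM}}$ in the proof of Theorem \ref{Theo: MF}.
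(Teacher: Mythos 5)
Your proposal is correct and follows essentially the same route as the paper's own proof: both decompose the expectation $\ESI{\nu}{{\rm Br}\paren{z^\star_{\rm AEM},\nu}}$ over the partition $\left\{B^\delta_j\right\}_j$ (the paper via indicator functions and dominated convergence, you via countable additivity of the integral), then bound the per-cell substitution error by $L\delta$ using the Lipschitz-in-$\alpha$ assumption and sum the cell probabilities to $1$. Your added remarks on absolute convergence and the boundedness of ${\rm Br}$ into $\left[a,b\right]$ match the paper's justification of the sum--expectation interchange, so there is no gap.
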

\begin{proof}
See Appendix \ref{App: Aux}.
\end{proof}
Next, we derive a similar asymptotic expansion for $z^\star_{1,n}$. To this end, let $\theta_n$ be a random variable distributed according to the empirical distribution of $\left\{\alpha_i\right\}_{i=2}^n$.
\begin{lemma}\label{Lem: AE-2}
For all $\delta>0$, $z^\star_{1,n}$ can be written as
\begin{align}
z^\star_{1,n}&=\sum_{j=-\infty}^\infty{\rm Br}\paren{z^\star_{1,n},\bar{\nu}^\delta_j}\PRPI{\theta_n}{B^\delta_j}+\BO{L\delta}\nonumber\\
&\hspace{4cm}+\BO{\frac{2L_z\max\paren{\abs{a},\abs{b}}}{n-1}}\nonumber
\end{align}
where $\PRPI{\theta_n}{B^\delta_j}=\PR{\paren{j-1}\delta< \theta_n\leq j\delta}$.
\end{lemma}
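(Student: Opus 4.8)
The plan is to mirror the proof of Lemma \ref{Lem: AE-1}, but to precede the quantization argument by a reduction step that absorbs the discrepancy between the individual mean variables $z^\star_{i,n}$ and the reference variable $z^\star_{1,n}$. First I would use the equilibrium fixed-point condition $x^\star_{i,n}={\rm Br}\paren{z^\star_{i,n},\alpha_i}$ together with the definition of $z^\star_{1,n}=\frac{1}{n-1}\sum_{i=2}^n x^\star_{i,n}$ to write
\begin{align}
z^\star_{1,n}=\frac{1}{n-1}\sum_{i=2}^n{\rm Br}\paren{z^\star_{i,n},\alpha_i}.\nonumber
\end{align}
This expresses the target quantity as an empirical average of best responses, each evaluated at its own equilibrium mean variable.

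The key reduction is to replace each argument $z^\star_{i,n}$ by the common value $z^\star_{1,n}$. Using Assumption 1 (Lipschitzness of ${\rm Br}$ in $z$) together with the elementary bound $\abs{z^\star_{i,n}-z^\star_{1,n}}=\frac{1}{n-1}\abs{x^\star_{i,n}-x^\star_{1,n}}\leq\frac{2\max\paren{\abs{a},\abs{b}}}{n-1}$ already used in \eqref{Eq: AMF0000}, the per-term substitution error is at most $L_z\cdot\frac{2\max\paren{\abs{a},\abs{b}}}{n-1}$. Averaging over $i$ preserves the order, so
\begin{align}
z^\star_{1,n}=\frac{1}{n-1}\sum_{i=2}^n{\rm Br}\paren{z^\star_{1,n},\alpha_i}+\BO{\frac{2L_z\max\paren{\abs{a},\abs{b}}}{n-1}}.\nonumber
\end{align}
By the definition of $\theta_n$ as a random variable distributed according to the empirical distribution of $\left\{\alpha_i\right\}_{i=2}^n$, the remaining average is exactly $\ESI{\theta_n}{{\rm Br}\paren{z^\star_{1,n},\theta_n}}$, which accounts for the second $\BO{\cdot}$ term in the statement.

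Finally, I would run the same quantization argument as in Lemma \ref{Lem: AE-1}, now with respect to the law of $\theta_n$ rather than that of $\nu$: partition $\R$ into the cells $B^\delta_j$ and on each cell replace $\alpha$ by the representative $\bar{\nu}^\delta_j\in B^\delta_j$. Assumption 2 (Lipschitzness of ${\rm Br}$ in $\alpha$) bounds the substitution error by $L\delta$ on each cell, and since the cells partition the support, integrating against $\PRPI{\theta_n}{B^\delta_j}$ yields a total error of $\BO{L\delta}$, giving
\begin{align}
\ESI{\theta_n}{{\rm Br}\paren{z^\star_{1,n},\theta_n}}=\sum_{j=-\infty}^\infty{\rm Br}\paren{z^\star_{1,n},\bar{\nu}^\delta_j}\PRPI{\theta_n}{B^\delta_j}+\BO{L\delta}.\nonumber
\end{align}
Combining the two displays establishes the claim. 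The only point requiring care is that the two error terms must be uniform, so that they do not interact: this is immediate because the $O\paren{1/(n-1)}$ bound holds termwise before averaging over $i$, while the $O\paren{L\delta}$ bound holds cellwise before integrating over $j$. I expect no serious obstacle beyond this bookkeeping, since the essential estimate $\abs{z^\star_{i,n}-z^\star_{1,n}}\leq\frac{2\max\paren{\abs{a},\abs{b}}}{n-1}$ is already in hand from the proof of Theorem \ref{Theo: MF}.
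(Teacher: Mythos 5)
Your proposal is correct and follows essentially the same route as the paper's own proof: the same fixed-point identity $z^\star_{1,n}=\frac{1}{n-1}\sum_{i=2}^n{\rm Br}\paren{z^\star_{i,n},\alpha_i}$, the same Lipschitz-in-$z$ substitution of $z^\star_{1,n}$ for each $z^\star_{i,n}$ with the $\BO{\frac{2L_z\max\paren{\abs{a},\abs{b}}}{n-1}}$ error, the same identification of the resulting average as $\ESI{\theta_n}{{\rm Br}\paren{z^\star_{1,n},\theta_n}}$, and the same cell-wise quantization argument from Lemma \ref{Lem: AE-1} contributing $\BO{L\delta}$. No gaps.
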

\begin{proof}
See Appendix \ref{App: Aux}.
\end{proof}
Combining Lemmas \ref{Lem: AE-1} and \ref{Lem: AE-2}, we have
\begin{align}\label{Eq: AMF-0}
&z^\star_{\rm AEM}-z^\star_{1,n}\nonumber\\
&=\sum_{j=-\infty}^\infty{\rm Br}\paren{z^\star_{\rm AEM},\bar{\nu}^\delta_j}\PRPI{\nu}{B^\delta_j}-{\rm Br}\paren{z^\star_{1,n},\bar{\nu}^\delta_j}\PRPI{\theta_n}{B^\delta_j}\nonumber\\
&\hspace{1.5cm}+\BO{L\delta}+\BO{\frac{2L_z\max\paren{\abs{a},\abs{b}}}{n-1}}\nonumber\\
&=\sum_{j=-\infty}^\infty{\rm Br}\paren{z^\star_{\rm AEM},\bar{\nu}^\delta_j}\paren{\PRPI{\nu}{B^\delta_j}-\PRPI{\theta_n}{B^\delta_j}}\nonumber\\
&\hspace{1.5cm}+\sum_{j=\infty}^\infty\paren{{\rm Br}\paren{z^\star_{\rm AEM},\bar{\nu}^\delta_j}-{\rm Br}\paren{z^\star_{1,n},\bar{\nu}^\delta_j}}\PRPI{\theta_n}{B^\delta_j}\nonumber\\
&\hspace{1.5cm}+\BO{L\delta}+\BO{\frac{2L_z\max\paren{\abs{a},\abs{b}}}{n-1}}
\end{align}
The absolute value of the second term on the right hand side of (the last equality in) \eqref{Eq: AMF-0}  can be upper bounded as
\begin{align}\label{Eq: AMF-00}
&\abs{\sum_{j=-\infty}^\infty\paren{{\rm Br}\paren{z^\star_{\rm AEM},\bar{\nu}^\delta_j}-{\rm Br}\paren{z^\star_{1,n},\bar{\nu}^\delta_j}}\PRPI{\theta_n}{B^\delta_j}}\nonumber\\
&\leq\sum_{j=-\infty}^\infty\abs{\paren{{\rm Br}\paren{z^\star_{\rm AEM},\bar{\nu}^\delta_j}-{\rm Br}\paren{z^\star_{1,n},\bar{\nu}^\delta_j}}}\PRPI{\theta_n}{B^\delta_j}\nonumber\\
&\stackrel{(a)}{\leq}\sum_{j=-\infty}^\infty L_z\abs{z^\star_{\rm AEM}-z^\star_{1,n}}\PRPI{\theta_n}{B^\delta_j}\nonumber\\
&=L_z\abs{z^\star_{\rm AEM}-z^\star_{1,n}}
\end{align}
where $(a)$ follows from the fact that ${\rm Br}\paren{z,\alpha}$ is Lipschitz in $z$. Combining equations \eqref{Eq: AMF-0} and \eqref{Eq: AMF-00}, we have
\begin{align}\label{Eq: AMF-000}
&\paren{1-L_z}\abs{z^\star_{\rm AEM}-z^\star_{1,n}}\nonumber\\
&\hspace{1.5cm}\leq\abs{\sum_{j=-\infty}^\infty{\rm Br}\paren{z^\star_{\rm AEM},\bar{\nu}^\delta_j}\paren{\PRPI{\nu}{B^\delta_j}-\PRPI{\theta_n}{B^\delta_j}}}\nonumber\\
&\hspace{2cm}+\BO{L\delta}+\BO{\frac{2L_z\max\paren{\abs{a},\abs{b}}}{n-1}}
\end{align}
Next lemma studies the asymptotic behavior of the first term in the right hand side of \eqref{Eq: AMF-000}.
\begin{lemma}\label{Lem: AE-3}
We have
\begin{eqnarray}
\lim_{n\rightarrow\infty}\abs{\sum_{j=-\infty}^\infty{\rm Br}\paren{z^\star_{\rm AEM},\bar{\nu}^\delta_j}\paren{\PRPI{\nu}{B^\delta_j}-\PRPI{\theta_n}{B^\delta_j}}}=0\nonumber
\end{eqnarray}
\end{lemma}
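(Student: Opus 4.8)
The plan is to recognize the sum as the difference of two expectations of a common bounded step function, and to control the resulting infinite sum via Scheff\'e's lemma. Set $M=\max\paren{\abs{a},\abs{b}}$ and recall that ${\rm Br}\paren{z,\alpha}\in\left[a,b\right]$, so that $\abs{{\rm Br}\paren{z^\star_{\rm AEM},\bar{\nu}^\delta_j}}\leq M$ uniformly in $j$. Since the cells $\left\{B^\delta_j\right\}_j$ partition $\R$, both $\left\{\PRPI{\nu}{B^\delta_j}\right\}_j$ and $\left\{\PRPI{\theta_n}{B^\delta_j}\right\}_j$ are probability mass functions (they sum to one over $j$), and the quantity of interest obeys
\begin{align}
&\abs{\sum_{j=-\infty}^\infty{\rm Br}\paren{z^\star_{\rm AEM},\bar{\nu}^\delta_j}\paren{\PRPI{\nu}{B^\delta_j}-\PRPI{\theta_n}{B^\delta_j}}}\nonumber\\
&\hspace{2cm}\leq M\sum_{j=-\infty}^\infty\abs{\PRPI{\nu}{B^\delta_j}-\PRPI{\theta_n}{B^\delta_j}}.\nonumber
\end{align}
Thus it suffices to show the $L_1$ distance between the two discretized distributions vanishes for each fixed $\delta$.

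First I would establish pointwise (termwise) convergence of the cell probabilities. Writing the distribution of $\theta_n$ as the empirical distribution $P^{(2)}_{n-1}$ of $\left\{\alpha_i\right\}_{i=2}^n$, a one-line count gives $P^{(2)}_{n-1}\paren{x}=\frac{n}{n-1}P_n\paren{x}-\frac{1}{n-1}\I{\alpha_1\leq x}$, so Assumption 3 (convergence of $P_n$ to $P$ at every point) yields $P^{(2)}_{n-1}\paren{x}\to P\paren{x}$ for all $x$; deleting a single agent does not change the limit. Hence, for each fixed $j$, $\PRPI{\theta_n}{B^\delta_j}=P^{(2)}_{n-1}\paren{j\delta}-P^{(2)}_{n-1}\paren{\paren{j-1}\delta}\to P\paren{j\delta}-P\paren{\paren{j-1}\delta}=\PRPI{\nu}{B^\delta_j}$.

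The hard part will be upgrading this termwise convergence to convergence of the full two-sided infinite sum, since pointwise convergence of summands alone does not rule out mass escaping to infinity. I would close this gap with Scheff\'e's lemma applied to the counting measure on $\Z$: the sequences $\left\{\PRPI{\theta_n}{B^\delta_j}\right\}_j$ and $\left\{\PRPI{\nu}{B^\delta_j}\right\}_j$ are nonnegative, each sums to one, and converge termwise, so $\sum_{j}\abs{\PRPI{\theta_n}{B^\delta_j}-\PRPI{\nu}{B^\delta_j}}\to 0$. Combined with the displayed bound this proves the lemma. As a self-contained alternative one may argue directly: convergence of $P^{(2)}_{n-1}$ to the proper distribution $P$ makes $\left\{\theta_n\right\}_n$ tight, so outside a finite window of cells both distributions place total mass at most $\eps$ for all large $n$; on the finite window the sum converges by the termwise statement, and letting $\eps\to 0$ finishes the argument.
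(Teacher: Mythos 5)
Your proof is correct, but it closes the argument with a genuinely different key lemma than the paper. Both arguments share the same reduction: you each observe that dropping agent $1$ does not affect the limit (your identity $P^{(2)}_{n-1}\paren{x}=\frac{n}{n-1}P_n\paren{x}-\frac{1}{n-1}\I{\alpha_1\leq x}$ makes explicit what the paper merely asserts), and hence that each cell probability $\PRPI{\theta_n}{B^\delta_j}$ converges to $\PRPI{\nu}{B^\delta_j}$ for fixed $j$. From there the routes diverge. The paper packages the two collections of cell probabilities as laws of discrete random variables $\gamma_n,\gamma$ supported on $\left\{\bar{\nu}^\delta_j\right\}_j$, shows $\gamma_n$ converges in distribution to $\gamma$ (using continuity of $P$ for the CDF convergence), and then invokes the bounded-continuous test-function characterization of weak convergence, which requires that ${\rm Br}\paren{z^\star_{\rm AEM},\cdot}$ be bounded \emph{and continuous} in $\alpha$ (available from the Lipschitz Assumption 2). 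You instead bound the sum by $\max\paren{\abs{a},\abs{b}}$ times the $\ell^1$ distance between the two probability mass functions and apply Scheff\'e's lemma for counting measure on $\Z$; this needs only boundedness of the best response, not its continuity, and it yields a stronger conclusion (total-variation convergence of the discretized laws, hence convergence of expectations uniformly over all bounded test functions, continuous or not). Your explicit handling of the mass-escape-to-infinity issue --- via Scheff\'e, or the tightness/finite-window alternative you sketch --- is precisely the point that the paper's weak-convergence machinery absorbs implicitly, since weak convergence to a proper limit law already forbids loss of mass against bounded continuous functions. In short: the paper runs the standard weak-convergence argument and reuses its continuity hypothesis, while your proof is more elementary and marginally more general, trading the portmanteau theorem for Scheff\'e's lemma.
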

\begin{proof}
See Appendix \ref{App: Aux}.
\end{proof}
Combining equations \eqref{Eq: AMF0000}, \eqref{Eq: AMF-000} and Lemma \ref{Lem: AE-3}, we have
\begin{align}
\lim_{n\rightarrow\infty} \sup_{1\leq i\leq n}\abs{z^\star_{i,n}-z^\star_{\rm AEM}}\leq \BO{L\delta}
\end{align}
The desired result follows from the fact that the positive constant $\delta$ can be arbitrarily close to zero.
%%%%%%%%%%%%%%%%%%%%%%%%%%%%%%%%%%%%%%%%%%%%%%%%%%%%%%%%%%%%%%%%%%%%%%%%%%%%%%%%%%%%%%%%%%%%%%%%%%%%%%%%%%%%%%%%%%%%%%%%%%%%%%
\section{Proof of Auxiliary Lemmas} \label{App: Aux}
\subsection{Proof of Lemma \ref{Lem: AE-1}}\label{App: Lem: AE-1}
Note that $z^\star_{\rm AEM}$ can be written as
\begin{align}\label{Eq: AMF-1}
z^\star_{\rm AEM}&=\ESI{\nu}{{\rm Br}\paren{z^\star_{\rm AEM},\nu}}\nonumber\\
&=\ESI{\nu}{\sum_{j=-\infty}^\infty{\rm Br}\paren{z^\star_{\rm AEM},\nu}\I{B^\delta_j}}\nonumber\\
&\stackrel{(a)}{=}\sum_{j=-\infty}^\infty\ESI{\nu}{{\rm Br}\paren{z^\star_{\rm AEM},\nu}\I{B^\delta_j}}\nonumber\\
&=\sum_{j=-\infty}^\infty\ESI{\nu}{\paren{{\rm Br}\paren{z^\star_{\rm AEM},\nu}-{\rm Br}\paren{z^\star_{\rm AEM},\bar{\nu}^\delta_j}\right.\right.\nonumber\\
&\hspace{2cm}\left.\left.+{\rm Br}\paren{z^\star_{\rm AEM},\bar{\nu}^\delta_j}}\I{B^\delta_j}}\nonumber\\
&=\sum_{j=-\infty}^\infty\ESI{\nu}{\paren{{\rm Br}\paren{z^\star_{\rm AEM},\bar{\nu}^\delta_j}}\I{B^\delta_j}}\nonumber\\
&+\sum_{j=-\infty}^\infty\ESI{\nu}{\paren{{\rm Br}\paren{z^\star_{\rm AEM},\nu}-{\rm Br}\paren{z^\star_{\rm AEM},\bar{\nu}^\delta_j}}\I{B^\delta_j}}
\end{align}
where $(a)$ follows from Lebesgue dominated convergence Theorem. The second term in the right hand side of \eqref{Eq: AMF-1} can be upper bounded as
\begin{align}\label{Eq: AMF-2}
&\abs{\sum_{j=-\infty}^\infty\ESI{\nu}{\paren{{\rm Br}\paren{z^\star_{\rm AEM},\nu}-{\rm Br}\paren{z^\star_{\rm AEM},\bar{\nu}^\delta_j}}\I{B^\delta_j}}}\nonumber\\
&\leq \sum_{j=-\infty}^\infty\ESI{\nu}{\abs{\paren{{\rm Br}\paren{z^\star_{\rm AEM},\nu}-{\rm Br}\paren{z^\star_{\rm AEM},\bar{\nu}^\delta_j}}}\I{B^\delta_j}}\nonumber\\
 &\stackrel{(a)}{\leq}\sum_{j=-\infty}^\infty\ESI{\nu}{L\abs{\nu-\bar{\nu}^\delta_j}\I{B^\delta_j}}\nonumber\\
 &\stackrel{(b)}{\leq}\sum_{j=-\infty}^\infty\ESI{\nu}{L\delta\I{B^\delta_j}}\nonumber\\
 &=L\delta\sum_{j=-\infty}^\infty\PRPI{\nu}{B^\delta_j}\nonumber\\
 &=L\delta
\end{align}
where $(a)$ follows from the fact that ${\rm Br}\paren{z,\alpha}$ is Lipschitz in $\alpha$ and $(b)$ follows from the fact that $\abs{\nu-\bar{\nu}^\delta_j}\leq \delta$ when $\nu$ belongs to $B^\delta_j$. Combining equations \eqref{Eq: AMF-1} and \eqref{Eq: AMF-2}, we have
\begin{align}
z^\star_{\rm AEM}&=\sum_{j=-\infty}^\infty{\rm Br}\paren{z^\star_{\rm AEM},\bar{\nu}^\delta_j}\PRPI{\nu}{B^\delta_j}+\BO{L\delta}\nonumber
\end{align}
%%%%%%%%%%%%%%%%%%%%%%%%%%%%%%%%%%%%%%%%%%%%%%%%%%%%%%%%%%%%%%%%%%%%%%%%%%%%%%%%%%%%%%%%%%
\subsection{Proof of Lemma \ref{Lem: AE-2}}
We can write $z^\star_{1,n}$ as
\begin{align}\label{Eq: App-Aux-1}
z^\star_{1,n}&=\frac{1}{n-1}\sum_{i=2}x^\star_{i,n}\nonumber\\
&=\frac{1}{n-1}\sum_{i=2}^n{\rm Br}\paren{z^\star_{i,n},\alpha_i}\nonumber\\
&=\frac{1}{n-1}\sum_{i=2}^n{\rm Br}\paren{z^\star_{1,n},\alpha_i}\nonumber\\
&\hspace{0.5cm}+\frac{1}{n-1}\sum_{i=2}^n{\rm Br}\paren{z^\star_{i,n},\alpha_i}-{\rm Br}\paren{z^\star_{1,n},\alpha_i}
\end{align}
The absolute value of the second term on the right hand side of \eqref{Eq: App-Aux-1} can be upper bounded as
\begin{align}
&\hspace{-0.5cm}\frac{1}{n-1}\abs{\sum_{i=2}^n{\rm Br}\paren{z^\star_{i,n},\alpha_i}-{\rm Br}\paren{z^\star_{1,n},\alpha_i}}\nonumber\\
&\hspace{1cm}\leq \frac{1}{n-1}\sum_{i=2}^n\abs{{\rm Br}\paren{z^\star_{i,n},\alpha_i}-{\rm Br}\paren{z^\star_{1,n},\alpha_i}}\nonumber\\
&\hspace{1cm}\stackrel{(a)}{\leq} \frac{1}{n-1}\sum_{i=2}^nL_z\abs{z^\star_{i,n}-z^\star_{1,n}}\nonumber\\
&\hspace{1cm}\stackrel{(b)}\leq \frac{1}{n-1}\sum_{i=2}^n\frac{L_z}{n-1}\abs{x^\star_{i,n}-x^\star_{1,n}}\nonumber\\
&\hspace{1cm}\stackrel{(c)}\leq \frac{1}{n-1}\sum_{i=2}^n\frac{L_z}{n-1}2\max\paren{\abs{a},\abs{b}}\nonumber\\
&\hspace{1cm}= \frac{2L_z\max\paren{\abs{a},\abs{b}}}{n-1}
\end{align}
where $(a)$ follows from the fact that ${\rm Br}\paren{z,\alpha}$ is Lipschitz in $z$, $(b)$ follows from the fact that $\abs{z^\star_{i,n}-z^\star_{1,n}}=\frac{1}{n-1}\abs{x^\star_{i,n}-x^\star_{1,n}}$ and $(c)$ follows from the fact that $a\leq x^\star_{i,n}\leq b$  for all $i$.
Thus, we have
\begin{align}\label{Eq: App-Aux-2}
z^\star_{1,n}&=\frac{1}{n-1}\sum_{i=2}^n{\rm Br}\paren{z^\star_{1,n},\alpha_i}+\BO{\frac{2L_z\max\paren{\abs{a},\abs{b}}}{n-1}}\nonumber
\end{align}
Let $\theta_n$ be a random variable distributed according to the empirical distribution of $\left\{\alpha_i\right\}_{i=2}^n$, \emph{i.e.,} a uniform distribution over $\left\{\alpha_i\right\}_{i=2}^n$. Then, we have
\begin{align}
\frac{1}{n-1}\sum_{i=2}^n{\rm Br}\paren{z^\star_{1,n},\alpha_i}=\ESI{\theta_n}{{\rm Br}\paren{z^\star_{1,n},\theta_n}}\nonumber
\end{align}
Following similar steps as those in the Appendix \ref{App: Lem: AE-1}, we have
\begin{align}
\ESI{\theta_n}{{\rm Br}\paren{z^\star_{1,n},\theta_n}}&=\sum_{j=-\infty}^\infty{\rm Br}\paren{z^\star_{1,n},\bar{\nu}^\delta_j}\PRPI{\theta_n}{B^\delta_j}\nonumber\\
&+\BO{L\delta}+\BO{\frac{2L_z\max\paren{\abs{a},\abs{b}}}{n-1}}\nonumber
\end{align}
where $\PRPI{\theta_n}{B^\delta_j}=\PR{(j-1)\delta<\theta_n\leq j\delta}$.
%%%%%%%%%%%%%%%%%%%%%%%%%%%%%%%%%%%%%%%%%%%%%%%%%%%%%%%%%%%%%%%%%%%%%%%%%%%%%%%%%%%%%%%%%%%%%%%%%%%%%%%%%%%%%%%%%%%%%%%%%%
\subsection{Proof of Lemma \ref{Lem: AE-3}}
%Recall that the set ${B}^\delta_j$ is defined as ${ B^\delta_j}=\left\{\paren{j-1}\delta\leq \alpha<j\delta\right\}$ and $\bar{\nu}^\delta_j$ is a point in  ${B}^\delta_j$.
 Let $\theta_n$ be a random variable distributed according to the empirical distribution of $\left\{\alpha_i\right\}_{i=2}^n$. We define two axillary random variables $\gamma$ and $\gamma_n$ , distributed over $\left\{\bar{\nu}^\delta_j\right\}_{j=-\infty}^\infty$, with the following probability mass functions:
\begin{align}
\PRP{\gamma=\bar{\nu}^\delta_j}&:= \PRPI{\nu}{B^\delta_j}=\PR{(j-1)\delta<\nu\leq j\delta}\nonumber\\
\PRP{\gamma_n=\bar{\nu}^\delta_j}&:= \PRPI{\theta_n}{B^\delta_j}=\PR{(j-1)\delta<\theta_n\leq j\delta}\nonumber
\end{align}
Thus, we have
\begin{align}
\sum_{j=-\infty}^\infty{\rm Br}\paren{z^\star_{\rm AEM},\bar{\nu}^\delta_j}\paren{\PRPI{\nu}{B^\delta_j}-\PRPI{\theta_n}{B^\delta_j}}\nonumber\\
=\ESI{\gamma}{{\rm Br}\paren{z^\star_{\rm AEM},\gamma}}-\ESI{\gamma_n}{{\rm Br}\paren{z^\star_{\rm AEM},\gamma_n}}
\end{align}
The distribution functions of $\gamma$ and $\gamma_n$ can be written as
\begin{align}
\PR{\gamma\leq x}&=\sum_{j=-\infty}^\infty\I{\bar{\nu}^\delta_{j-1}\leq x<\bar{\nu}^\delta_j}\PR{\nu \leq \paren{j-1}\delta}\nonumber\\
\PR{\gamma_n\leq x}&=\sum_{j=-\infty}^\infty\I{\bar{\nu}^\delta_{j-1}\leq x<\bar{\nu}^\delta_j}\PR{\theta_n \leq \paren{j-1}\delta}\nonumber
\end{align}
 Since $P_n\paren{\alpha}$ converges in distribution to $P\paren{\alpha}$, $\theta_n$ also converges in distribution to $P\paren{\alpha}$. Recall that $\nu$ is distributed according to $P\paren{\alpha}$. The distribution function $P\paren{\alpha}$ is continuous, thus, we have
\begin{align}
\lim_{n\ra\infty}\PR{\theta_n \leq \paren{j-1}\delta}=\PR{\nu \leq \paren{j-1}\delta}\nonumber
\end{align}
as $\theta_n$ converges in distribution to $P\paren{\alpha}$ which implies that $\gamma_n$ converges in distribution to $\gamma$. Since ${\rm Br}\paren{z,\alpha}$ is bounded and continuous in $\alpha$, and $\gamma_n$ converges in distribution to $\gamma$, we have
\begin{align}
\lim_{n\rightarrow\infty}\ESI{\gamma_n}{{\rm Br}\paren{z^\star_{\rm AEM},\gamma_n}}=\ESI{\gamma}{{\rm Br}\paren{z^\star_{\rm AEM},\gamma}}\nonumber
\end{align}
\cite{PB95} which completes the proof.
%%%%%%%%%%%%%%%%%%%%%%%%%%%%%%%%%%%%%%%%%%%%%%%%%%%%%%%%%%%%%%%%%%%%%%%%%%%%%%%%%%
\section{Proof of Proposition \ref{Lem: QAMF-E}} \label{App: QAMF-E}
Note that $\abs{z^\star_{\rm AEM}-\hat{z}^\star_{\rm AEM}}$ can be upper bounded as
\begin{align}
&\abs{z^\star_{\rm AEM}-\hat{z}^\star_{\rm AEM}}\nonumber\\
&\hspace{1cm}=\abs{\ESI{\nu}{{\rm Br}\paren{z^\star_{\rm AEM},\nu}-{\rm Br}\paren{\hat{z}^\star_{\rm AEM},Q\paren{\nu}}}}\nonumber\\
&\hspace{1cm}\leq \ESI{\nu}{\abs{{\rm Br}\paren{z^\star_{\rm AEM},\nu}-{\rm Br}\paren{{z}^\star_{\rm AEM},Q\paren{\nu}}}}\nonumber\\
&\hspace{1cm}+\ESI{\nu}{\abs{{\rm Br}\paren{z^\star_{\rm AEM},Q\paren{\nu}}-{\rm Br}\paren{\hat{z}^\star_{\rm AEM},Q\paren{\nu}}}}\nonumber\\
&\hspace{1cm}\stackrel{(a)}{\leq} L\ESI{\nu}{\abs{\nu-Q\paren{\nu}}}+L_z\abs{z^\star_{\rm AEM}-\hat{z}^\star_{\rm AEM}}\nonumber
\end{align}
where $(a)$ follows from the fact that the best response function is Lipschitz in $z$ and $\alpha$. The desired result follows from the last inequality above.
%%%%%%%%%%%%%%%%%%%%%%%%%%%%%%%%%%%%%%%%%%%%%%%%%%%%%%%%%%%%%%%%%%%%%%%%%%%%%%%%%%
\section{Proof of Proposition \ref{Lem: PAMF}} \label{App: PAMF}
To prove this result, we rewrite $\abs{\hat{z}^\star_{n,\rm AEM}-{z}^\star_{\rm AEM}}$ as
\begin{align}
&\abs{z^\star_{\rm AEM}-\hat{z}^\star_{n,\rm AEM}}\nonumber\\
&\hspace{1cm}=\abs{\ESI{\nu}{{\rm Br}\paren{z^\star_{\rm AEM},\nu}}-\ESI{\hat{\nu}_n}{{\rm Br}\paren{\hat{z}^\star_{n,\rm AEM},\hat{\nu}_n}}}\nonumber\\
&\hspace{1cm}=\abs{\ESI{\nu,\hat{\nu}_n}{{\rm Br}\paren{z^\star_{\rm AEM},\nu}-{\rm Br}\paren{\hat{z}^\star_{n,\rm AEM},\hat{\nu}_n}}}\nonumber
\end{align}
where $\nu$ and $\hat{\nu}_n$ are two random variables distributed according to $P\paren{\alpha}$ and $P_n\paren{\alpha}$, receptively. The desired result follows from similar steps as those in the proof of Proposition \ref{Lem: QAMF-E}.
%%%%%%%%%%%%%%%%%%%%%%%%%%%%%%%%%%%%%%%%%%%%%%%%%%%%%%%%%%%%%%%%%%%%%%%%%%%%%%%%%
\section{Proof of Proposition \ref{Lem: U-PMFA}} \label{App: U-PMFA}
The distance between $z^\star_{i,n}$ and $\hat{z}^\star_{n,\rm AEM}$ can be upper bounded as
\begin{align}
&\abs{z^\star_{i,n}-\hat{z}^\star_{n,\rm AEM}}\nonumber\\
&\hspace{1cm}=\abs{\frac{1}{n-1}\sum_{j\neq i}{\rm Br}\paren{z^\star_{j,n},\alpha_j}-\frac{1}{n}\sum_j{\rm Br}\paren{\hat{z}^\star_{n,\rm AEM},\alpha_j}}\nonumber\\
&\hspace{1cm}\leq \abs{\frac{1}{n-1}\sum_{j\neq i}{\rm Br}\paren{z^\star_{j,n},\alpha_j}-{\rm Br}\paren{z^\star_{i,n},\alpha_j}}\nonumber\\
&\hspace{1cm}+ \abs{\frac{1}{n-1}\sum_{j\neq i}{\rm Br}\paren{z^\star_{i,n},\alpha_j}-\frac{1}{n}\sum_{j}{\rm Br}\paren{z^\star_{i,n},\alpha_j}}\nonumber\\
&\hspace{1cm}+\abs{\frac{1}{n}\sum_{j}{\rm Br}\paren{z^\star_{i,n},\alpha_j}-{\rm Br}\paren{\hat{z}^\star_{n,\rm AEM},\alpha_j}}\nonumber\\
&\hspace{1cm}\leq \frac{1}{n-1}\sum_{j\neq i}L_z\abs{z^\star_{j,n}-z^\star_{i,n}}+\frac{1}{n}\sum_{j}L_z\abs{z^\star_{i,n}-\hat{z}^\star_{n,\rm AEM}}\nonumber\\
&\hspace{1cm}+ \abs{\frac{1}{n}{\rm Br}\paren{z^\star_{i,n},\alpha_i}-\frac{1}{n\paren{n-1}}\sum_{j\neq i}{\rm Br}\paren{z^\star_{i,n},\alpha_j}}
\end{align}
Note that $a\leq {\rm Br}\paren{z,\alpha}\leq b$. Also, $\abs{z^\star_{j,n}-z^\star_{i,n}}$ can be upper bounded as
\begin{align}
\abs{z^\star_{j,n}-z^\star_{i,n}}&=\frac{1}{n-1}\abs{x^\star_{j,n}-x^\star_{i,n}}\nonumber\\
&\leq \frac{2\max\paren{\abs{b},\abs{a}}}{n-1}\nonumber
\end{align}
as $a\leq x_i\leq b$. Thus, $\abs{z^\star_{i,n}-\hat{z}^\star_{n,\rm AEM}}$ can be further upper bounded as
\begin{align}
&\abs{z^\star_{i,n}-\hat{z}^\star_{n,\rm AEM}}\leq 2\max\paren{\abs{b},\abs{a}}\paren{\frac{L_z}{n-1}+\frac{1}{n}}\nonumber\\
&\hspace{4cm}+L_z\abs{z^\star_{i,n}-\hat{z}^\star_{n,\rm AEM}}\nonumber
\end{align}
Hence, we have
\begin{align}
&\abs{z^\star_{i,n}-\hat{z}^\star_{n,\rm AEM}}\leq \frac{2\max\paren{\abs{b},\abs{a}}}{1-L_z}\paren{ \frac{L_z}{n-1}+\frac{1}{n}}\nonumber
\end{align}
which completes the proof.
%%%%%%%%%%%%%%%%%%%%%%%%%%%%%%%%%%%%%%%%%%%%%%%%%%%%%%%%%%%%%%%%%%%%%%%%%%%%%%%%%
\section{Proof of Proposition \ref{Lem: CMFA}} \label{App: CMFA}
Note that the $P_n\paren{\alpha}$-based AEM can be written as
 \begin{align}
\hat{z}^\star_{n,\rm AEM}&=\frac{1}{n}\sum_{j=1}^k\sum_{i=1}^n{\rm Br}\paren{\hat{z}^\star_{n,\rm AEM},\alpha_i}\I{\alpha_i\in{C}_j}\nonumber
\end{align}
Thus, we have
\begin{align}
&\abs{\tilde{z}^\star_{n,\rm AEM}-\hat{z}^\star_{n,\rm AEM}}\nonumber\\
&=\abs{\frac{1}{n}\sum_{j=1}^k\sum_{i=1}^n\paren{{\rm Br}\paren{\hat{z}^\star_{n,\rm AEM},\alpha_i}-{\rm Br}\paren{\tilde{z}^\star_{n,\rm AEM},{c}_j}}\I{\alpha_i\in{C}_j}}\nonumber\\
&\leq \frac{1}{n}\sum_{j=1}^k\sum_{i=1}^n\abs{\paren{{\rm Br}\paren{\hat{z}^\star_{n,\rm AEM},\alpha_i}-{\rm Br}\paren{\hat{z}^\star_{n,\rm AEM},{c}_j}}}\I{\alpha_i\in {C}_j}\nonumber\\
&+\frac{1}{n}\sum_{j=1}^k\sum_{i=1}^n\abs{\paren{{\rm Br}\paren{\hat{z}^\star_{n,\rm AEM},{c}_j}-{\rm Br}\paren{\tilde{z}^\star_{n,\rm AEM},{c}_j}}}\I{\alpha_i\in {C}_j}\nonumber\\
&\leq \frac{1}{n}\sum_{j=1}^k\sum_{i=1}^nL\abs{\alpha_i-{c}_j}\I{\alpha_i\in {C}_j}\nonumber\\
&+\frac{1}{n}\sum_{j=1}^k\sum_{i=1}^nL_z\abs{\hat{z}^\star_{n,\rm AEM}-\tilde{z}^\star_{n,\rm AEM}}\I{\alpha_i\in {C}_j}\nonumber\\
&=L_z\abs{\hat{z}^\star_{n,\rm AEM}-\tilde{z}^\star_{n,\rm AEM}}+L\ESI{\hat{\nu}_n}{\abs{\tilde{\nu}_n-\hat{\nu}_n}}\nonumber
\end{align}
\addtolength{\textheight}{-3cm}   % This command serves to balance the column lengths
                                  % on the last page of the document manually. It shortens
                                  % the textheight of the last page by a suitable amount.
                                  % This command does not take effect until the next page
                                  % so it should come on the page before the last. Make
                                  % sure that you do not shorten the textheight too much.
\bibliographystyle{IEEEtran}
\bibliography{IEEEabrv,Meanfield}

\end{document}